\documentclass[12pt]{amsart}
\setlength{\baselineskip}{1.8em}
\usepackage{graphicx}
\usepackage{amscd}
\usepackage[top=1in,bottom=1in,left=1.15in,right=1.15in]{geometry} 
\usepackage{amsmath,amssymb,amsfonts}
\usepackage{amsthm}
\usepackage{color}
\usepackage{hyperref}
\newtheorem{theo}{Theorem}[section]
\newtheorem{prop}[theo]{Proposition}
\newtheorem{lemma}[theo]{Lemma}
\newtheorem{defn}[theo]{Definition}
\newtheorem{rem}[theo]{Remark}
\newtheorem{cor}[theo]{Corollary}

\newtheorem{con}[theo]{Conjecture}

\begin{document}
\title{H(2)-unknotting operation related to 2-bridge links}
\author{Yuanyuan Bao}
\address{
Graduate school of Mathematics,
Tokyo Institute of Technology,
Oh-okayama, Meguro, Tokyo 152-8551, Japan
}
\email{bao.y.aa@m.titech.ac.jp}
\date{}
\begin{abstract}
This paper concerns the H(2)-unknotting numbers of links related to 2-bridge links. It consists of three parts. In the first part, we consider a necessary and sufficient condition for a 2-bridge link to have H(2)-unknotting number one. The second part concerns an explicit form of composite links with H(2)-unknotting number one. In the last part, we develop a method of studying the H(2)-unknotting numbers of some tangle unknotting number one knots via 2-bridge knots.

\end{abstract}
\keywords{H(2)-unknotting number, 2-bridge link, Dehn surgery, Ozsv{\'a}th-Szab{\'o} correction term.}
\subjclass[2010]{Primary 57M27 57M25 57M50}
\thanks{The author is supported by scholarship from
the Ministry of Education, Culture, Sports, Science and Technology of Japan.}
\maketitle

\section{Introduction}
In this paper all the links are assumed to be unoriented except otherwise stated.
An H(2)-move is a local transformation between link diagrams, as shown in Figure~\ref{fig:f2}. It is an unknotting operation.
The {\it H(2)-unknotting number} \cite{MR1075165} of a link $L$ is the minimal number of H(2)-moves needed to change the link into the unknot, and we denote it by $u_{2}(L)$. In this paper, we study this unknotting operation for links related to 2-bridge links. It consists of three parts.
\begin{figure}[h]
	\centering
		\includegraphics[width=0.5\textwidth]{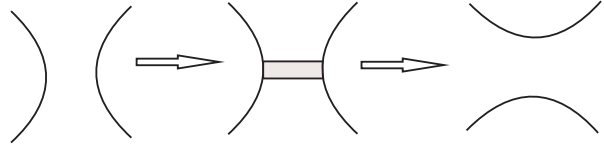}
	\caption{An H(2)-move.}
	\label{fig:f2}
\end{figure}

In Section~2, we make some observations about 2-bridge links with H(2)-unknotting number one. Our purpose is to find out 2-bridge links whose H(2)-unknotting numbers are one. Here is our main observation.
\begin{prop}
\label{th1}
The 2-bridge link $S(p, q)$ has H(2)-unknotting number one if and only if the lens space $L(p, q)$ can be obtained as an integral surgery along a Berge knot in $S^{3}$.
\end{prop}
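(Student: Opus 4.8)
The plan is to pass to the double branched covers, turn the H(2)-move into a Dehn surgery, and then apply the theory of lens space surgeries. Write $\Sigma_2(L)$ for the double branched cover of $S^3$ along a link $L$, so that $\Sigma_2(S(p,q))=L(p,q)$ and $\Sigma_2(\mathrm{unknot})=S^3$, and let $\iota$ denote the covering involution. The first ingredient is a Montesinos-type dictionary for band surgery: if $L'$ is obtained from a link $L$ by one H(2)-move along a band $b$, then $N(b)$ is a $3$-ball meeting $L$ in a trivial two-string tangle, its preimage in $\Sigma_2(L)$ is a solid torus, and the H(2)-move lifts to a re-gluing of that solid torus; equivalently $\Sigma_2(L')$ is obtained from $\Sigma_2(L)$ by a Dehn surgery along the core knot $\widetilde K$ of that solid torus, a knot that is preserved and reversed by $\iota$. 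Conversely, an $\iota$-equivariant Dehn surgery descends to an H(2)-move on the quotient. So I would first make this dictionary precise, pinning down the slopes: because $\iota$ reverses $\widetilde K$ it acts on $\partial N(\widetilde K)$ as $-\mathrm{id}$ on homology, so every slope is automatically $\iota$-invariant, and the fact that $N(b)$ is an unknotted ball forces the surgery to be integral relative to the framing carried by $b$.

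Granting this dictionary, the forward implication runs as follows. If $u_2(S(p,q))=1$, there is a band $b$ realising an H(2)-move from $S(p,q)$ to the unknot $U$, so $S^3=\Sigma_2(U)$ is obtained from $L(p,q)=\Sigma_2(S(p,q))$ by a surgery along an $\iota$-invariant knot $\widetilde K$; dualising, $L(p,q)$ is obtained from $S^3$ by an integral surgery along the dual knot $K\subset S^3$. Integrality is forced: $H_1(L(p,q))=\mathbb Z/p$ pins the coefficient to the form $\pm p/b$, and then the cyclic surgery theorem gives $b=\pm1$ (the case of $K$ the unknot is the degenerate one, where $S(p,q)$ is a torus link). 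It remains to see that $K$ is a \emph{Berge} knot, and for this I would examine the lift of the band carefully: because $S(p,q)$ is in $2$-bridge position and $b$ gives an H(2)-move onto $U$, the dual knot $\widetilde K$ can be isotoped onto a genus-one Heegaard torus of $L(p,q)$ as a homology generator, i.e.\ it is a \emph{simple} (grid number one) knot; equivalently $K$ sits on a genus-two Heegaard surface of $S^3$ as a doubly primitive knot. By Berge's description, and the classification of simple knots in lens spaces admitting $S^3$ surgeries, this is precisely the statement that $K$ is a Berge knot, and the surgery along it is the one producing $L(p,q)$.

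For the converse, suppose $L(p,q)$ is obtained by integral surgery along a Berge knot $K$. Then the dual knot $K^*\subset L(p,q)$ is simple: it lies on a genus-one Heegaard torus $T$ of $L(p,q)$ as a homology generator, and the surgery on $K^*$ recovering $S^3$ is the Heegaard slope. Now $L(p,q)=\Sigma_2(S(p,q))$ carries the covering involution $\iota$, with quotient $(S^3,S(p,q))$; since $T$ is (isotopic to) the preimage of a bridge sphere, it is $\iota$-invariant and $\iota|_T$ is the elliptic involution, which fixes the isotopy class of every essential curve on $T$. Hence $K^*$ can be straightened to be $\iota$-invariant, with $\iota$-invariant surgery slope, and the dictionary carries this equivariant surgery to an H(2)-move downstairs: the quotient after surgery is $S^3/\iota=S^3$ branched over the unknot, the quotient before surgery is $(S^3,S(p,q))$, so the H(2)-move takes $S(p,q)$ to $U$. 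As $L(p,q)\ne S^3$, the link $S(p,q)$ is nontrivial, and therefore $u_2(S(p,q))=1$.

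The step I expect to be the main obstacle is the equivariance, together with the slope bookkeeping, in both directions. On the forward side one must show that the knot produced by lifting the band is genuinely a simple (doubly primitive) knot, not merely a knot carrying some lens space surgery — this is exactly what upgrades ``lens space surgery'' to ``Berge knot''. On the backward side one must show that the simple knot $K^*$ and its Heegaard torus can indeed be isotoped to be $\iota$-invariant — equivalently that the Berge surface, and the surface slope it determines, can be realised invariantly under an involution of $S^3$ fixing an unknot. I expect this either admits a uniform treatment using that Berge knots are strongly invertible and of tunnel number one, or else requires a short case analysis over Berge's families exploiting the explicit symmetry of each construction; maintaining precise control of framings, so that the conclusion is exactly $L(p,q)$ and exactly $S(p,q)$, is the accompanying technical burden.
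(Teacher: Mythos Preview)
Your Montesinos-trick dictionary and the overall architecture are correct, and your backward direction---making the simple dual knot $\iota$-equivariant so that the surgery descends to an H(2)-move---is essentially the content of the result the paper cites as Theorem~\ref{ichi} (doubly-primitive knots are strongly invertible in a way compatible with the surface framing). That part is fine.

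The genuine gap is in the forward direction. You correctly extract from the band an integral surgery $L(p,q)=S^3_{\pm p}(K)$ on some knot $K$, but then you try to prove that \emph{this particular} $K$ is doubly primitive by arguing that the band, and hence its lift, can be put nicely with respect to the $2$-bridge structure. There is no reason for an arbitrary unknotting band on $S(p,q)$ to respect a bridge sphere, so the claim that $\widetilde K$ is simple in $L(p,q)$ is unsupported; you flag this yourself as ``the main obstacle,'' and it really is one. (Your invocation of the cyclic surgery theorem to force integrality is also unnecessary and not quite right: the Montesinos trick already yields an integral slope, since the H(2)-move replaces one trivial tangle by another whose lifted meridian meets the old one once.)

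The paper sidesteps this entirely: it does \emph{not} show that the specific $K$ coming from the band is Berge. Instead it observes that $L(p,q)$ is integral surgery on \emph{some} knot, and then invokes Greene's theorem (Theorem~\ref{joshua}) that any lens space arising from integral surgery on a knot in $S^3$ also arises from integral surgery on a Berge knot. That single deep citation replaces the step you are missing. So your route would, if completed, prove the stronger statement that every unknotting band lifts to a Berge knot---interesting, but not needed for Proposition~\ref{th1}, and much harder than quoting Greene.
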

We remark that this observation is just a corollary of some known results about integral surgery. Since our interest is H(2)-unknotting operation, we think it is worth to write it down. In \cite{MR2573402}, an incomplete table of H(2)-unknotting numbers of knots is provided. Among knots with nine crossings, there are six knots whose H(2)-unknotting numbers are unknown. We confirm that among them the 2-bridge knots $9_{21}, 9_{23}, 9_{26}$ and $9_{31}$ are knots with H(2)-unknotting number one. We refer to Rolfson's table for the notations of knots.

\begin{cor}
\label{table}
The 2-bridge knots $9_{21}, 9_{23}, 9_{26}$ and $9_{31}$ have H(2)-unknotting number one.
\end{cor}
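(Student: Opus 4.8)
The plan is to deduce everything from Proposition~\ref{th1}. For each of the four knots $K\in\{9_{21},9_{23},9_{26},9_{31}\}$ I would carry out two steps: first determine the Schubert normal form $S(p,q)$ of $K$, and second exhibit a Berge knot $B\subset S^{3}$ together with an integer $n$ such that $n$-surgery on $B$ yields the lens space $L(p,q)$. Since each of these knots is nontrivial we automatically have $u_{2}(K)\ge 1$, so by Proposition~\ref{th1} the corollary follows the moment these surgery descriptions are written down.

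For the first step I would read the fractions $p/q$ directly off the standard tables of $2$-bridge knots (Rolfsen's appendix, or Burde--Zieschang), and I would keep in mind the equivalences I am entitled to use: on the link side $S(p,q)=S(p,q')$ whenever $q'\equiv q^{\pm1}\pmod{p}$, and on the surgery side $L(p,q)\cong L(p,q')$ whenever $q'\equiv\pm q^{\pm1}\pmod p$, together with the observation that mirroring sends a Berge knot to a Berge knot, so that ``$L(p,q)$ is an integral surgery on a Berge knot'' is insensitive to orientation reversal of the lens space. This produces, for each knot, a small orbit of candidate pairs $(p,q)$ to test. The second step is the substantive one: I would run through Berge's list of knots admitting lens-space surgeries --- torus knots, cables of torus knots, the Berge--Osoinach sporadic knots, and the knots lying on the genus-two Heegaard surface of $S^{3}$ --- and for each of the four lens spaces locate it among the lens spaces realised by an \emph{integral} surgery, recording the knot $B$ and the slope $n$. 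Because the determinants $p$ here are small, this is a finite check; in particular Moser's lens spaces $L(rs\pm1,s^{2})$, obtained by $(rs\pm1)$-surgery on the $(r,s)$-torus knot, together with their cable analogues, already dispose of most small examples, and anything left over is matched against the explicit short list of sporadic Berge knots.

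The step I expect to be the main obstacle is precisely this bookkeeping. One must be careful that the surgery coefficient one writes down is a genuine integer, since Proposition~\ref{th1} requires an integral surgery while a torus knot also produces lens spaces at non-integral slopes; and one must track the identifications $q\leftrightarrow q^{\pm1}$ and the sign/orientation conventions faithfully, so as neither to miss a valid match nor to declare a spurious one. As an independent check on the upper bound, for each of these four knots one can also certify $u_{2}(K)\le1$ by hand, by displaying an explicit band in a diagram of $K$ whose H(2)-move yields the unknot; Proposition~\ref{th1} is then playing the role of telling us that such a band must exist and, via the Berge description, roughly where to look for it.
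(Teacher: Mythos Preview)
Your strategy is the same as the paper's: invoke Proposition~\ref{th1} and verify, for each of the four Schubert fractions, that the associated lens space is integral surgery on a Berge knot. Where you diverge is in how you propose to carry out that verification. You plan to locate, for each $L(p,q)$, an explicit Berge knot $B$ and an integer slope $n$; the paper instead appeals to the arithmetic classification recorded here as Theorem~\ref{berge} (Rasmussen's repackaging of Berge's list), which reduces the question to checking a short list of congruences in $p,q$ and an auxiliary integer $k$ with $q\equiv\pm k^{2}\pmod p$. Concretely, the paper takes $9_{21}=S(43,25)$, $9_{23}=S(45,64)$, $9_{26}=S(47,81)$, $9_{31}=S(55,144)$, reads off $k=5,8,9,12$ from $q=k^{2}$, and verifies types~(ii), (ii), (ii), (iii) of Theorem~\ref{berge} respectively. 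This is cleaner than hunting for the knots themselves and sidesteps the orientation and slope bookkeeping you flag as the main obstacle.

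One point where your heuristic would mislead you: none of these four lens spaces is an integral surgery on a torus knot. The Moser family $L(rs\pm1,s^{2})$ does not hit any of $L(43,25)$, $L(45,64)$, $L(47,81)$, $L(55,144)$ up to the allowed equivalences, so the ``most small examples'' expectation fails here and you would be pushed immediately to the hyperbolic Berge families. Your plan still succeeds in principle, but in practice you would end up reconstructing the congruence conditions of Theorem~\ref{berge} by hand; it is more efficient to quote that theorem directly, as the paper does.
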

Section~2 essentially has no direct relation to Sections 3 and 4.

Section~3 is about composite links of H(2)-unknotting number one. Bleiler \cite{MR1045396} and Eudave-Mu$\tilde{\rm n}$oz \cite{MR1112545} proved that if a composite link has H(2)-unknotting number one then it is a connected sum of a 2-bridge link and a prime link. The purpose of Section~3 is to study the explicit forms of the two summands of such composite links. First we have the following result:

\begin{prop}
\label{main}
If $K_{1}$ is the 2-bridge link $S(q, p)$, and $K_{2}$ is a $(p,q)$-tangle unknotting number one link, then the H(2)-unknotting number of the composite $K_{1}\sharp K_{2}$ is one.
\end{prop}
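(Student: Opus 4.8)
The inequality $u_{2}(K_{1}\sharp K_{2})\ge 1$ is immediate: since $K_{2}$ is not the unknot (its tangle unknotting number being one), the connected sum $K_{1}\sharp K_{2}$ is nontrivial. So the content of the Proposition is to produce a \emph{single} H(2)-move carrying $K_{1}\sharp K_{2}=S(q,p)\sharp K_{2}$ to the unknot. First I would spell out the hypothesis on $K_{2}$: that $K_{2}$ is a $(p,q)$-tangle unknotting number one link means there is a $3$-ball $B\subset S^{3}$ meeting $K_{2}$ in a trivial $2$-string tangle $(B,t)$ such that replacing $(B,t)$ by the rational $(p,q)$-tangle, which I will call $R(p/q)$, produces the unknot. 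Writing the complementary tangle $(S^{3}\setminus B,\,K_{2}\setminus B)$ as an outside tangle $U$ and denoting by $N(\,\cdot\,)$ the closure along $\partial B$, this says $K_{2}=N(U\cup t)$ while $N\big(U\cup R(p/q)\big)$ is the unknot.

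The construction I would give runs as follows. Realise $S(q,p)$ in the standard way as the closure of a rational tangle, remove a short unknotted arc from it to form the connected sum, and arrange the diagram of $S(q,p)\sharp K_{2}$ so that near the connect-sum sphere the rational-tangle part of $S(q,p)$ sits immediately alongside the ball $B$ of $K_{2}$. Then route a single band $b$ through a neighbourhood of the connect-sum sphere so that surgery along $b$ simultaneously (i) unknots the $S(q,p)$-summand, turning it into a trivial arc, and (ii) converts the trivial tangle $t$ inside $B$ into the $(p,q)$-tangle $R(p/q)$. A single band suffices for both effects because the only obstruction to removing the $S(q,p)$-summand by one band move is a rational-tangle discrepancy, and this discrepancy is cancelled precisely by inserting $R(p/q)$ at $B$; the matching is exactly the relation between the slope of $S(q,p)$ and that of the $(p,q)$-tangle, which is the reason for the index swap $p/q\leftrightarrow q/p$ in the statement. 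After this H(2)-move the link is the connected sum of the unknot (the former $S(q,p)$-summand) with $N\big(U\cup R(p/q)\big)$, which is the unknot by hypothesis; hence $u_{2}(K_{1}\sharp K_{2})=1$.

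It may be cleanest to isolate the mechanism first as a lemma: one H(2)-move, performed near the connect-sum sphere, carries $S(q,p)\sharp N(U\cup t)$ to $N\big(U\cup R(p/q)\big)$ for every outside tangle $U$, the Proposition being the special case in which $N\big(U\cup R(p/q)\big)$ is the unknot. The step I expect to be delicate is the explicit band $b$ and the verification that surgery along it is a \emph{single} H(2)-move rather than a composition of several; I would do this by tracking the rational-tangle slopes through an explicit diagram. A secondary matter requiring care is the bookkeeping of conventions -- whether $S(q,p)$ is the slope-$q/p$ or slope-$p/q$ closure, numerator versus denominator closure, and precisely which rational tangle is meant by the ``$(p,q)$-tangle'' -- so that the hypothesis on $K_{2}$ matches the tangle that the band actually inserts; and when $S(q,p)$ has two components one must check that the construction respects the choice of the summand component. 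As a sanity check, on double branched covers the Montesinos trick turns this H(2)-move into a Dehn surgery realising $L(q,p)$ connect-summed with the double branched cover of $K_{2}$ from $S^{3}$, which is consistent with the stated hypothesis on $K_{2}$.
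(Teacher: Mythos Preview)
Your overall plan---exhibit a single band on a well-chosen diagram of $S(q,p)\sharp K_{2}$---is exactly the paper's approach, but your proposal has the hypothesis on $K_{2}$ reversed, and this reversal is what prevents you from actually writing down the band. In the paper's Definition~\ref{def}, a $(p,q)$-tangle unknotting number one link $K_{2}$ is presented as $T_{1}\cup T_{2}$ where $T_{1}$ is the \emph{rational} tangle with Conway notation $(a_{1},\ldots,a_{n})$ (so $p/q=[a_{1},\ldots,a_{n}]$) and replacing $T_{1}$ by the \emph{trivial} tangle $T_{0}$ yields the unknot. You have written it the other way around: your $K_{2}$ contains a trivial tangle $t$ and inserting $R(p/q)$ unknots it. With that inverted picture, your description of what the band must do---``convert the trivial tangle $t$ inside $B$ into the $(p,q)$-tangle''---is backwards, and the cancellation you are hoping for between the $S(q,p)$ summand and the ball $B$ will not materialise as stated.

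The concrete mechanism the paper uses, and which you should aim for, is the continued-fraction shift: since $p/q=[a_{1},a_{2},\ldots,a_{n}]$, one has $q/(p-a_{1}q)=[a_{2},\ldots,a_{n}]$, so $S(q,p)$ is equivalent to $S(q,p-a_{1}q)$ and hence has the diagram $C(a_{2},\ldots,a_{n})$. Placing this diagram next to the $T_{1}$-portion of $K_{2}$ (which carries the twists $(a_{1},a_{2},\ldots,a_{n})$) and attaching one band across the connect-sum region causes the $(a_{2},\ldots,a_{n})$-twists of the two summands to cancel diagrammatically; what remains is $T_{0}\cup T_{2}$, the unknot. This is the content of the paper's Figure~\ref{fig:f5}, and it is this explicit continued-fraction bookkeeping---not an abstract ``rational-tangle discrepancy'' argument---that shows a \emph{single} band suffices. (A small side remark: your justification of $u_{2}\ge 1$ appeals to $K_{2}$ being nontrivial, but the definition does not guarantee that; it is the nontriviality of the $2$-bridge summand $S(q,p)$ that makes the connected sum nontrivial.)
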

We conjecture that the converse holds as well, which we expressed as Conjecture~\ref{con}. When we restrict the two summands to 2-bridge links, we have the following complete description.
\begin{prop}
\label{main2}
Suppose $S(p,q)$ and $S(r,s)$ are two non-trivial 2-bridge links. Then the composite $S(p,q)\sharp S(r,s)$ has H(2)-unknotting number one if and only if either $S(r,s)=S(q,p)$, or $S(p,q)=S(v,\epsilon)$ and $S(r,s)=S(vab+\epsilon, va^{2})$ for $\epsilon = \pm 1$ and some integers $v$, $a$ and $b$ satisfying $(a,b)=1$.
\end{prop}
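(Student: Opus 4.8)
The plan is to pass to double branched covers, where the assertion turns into a statement about reducible Dehn surgeries. Write $\Sigma(L)$ for the double cover of $S^{3}$ branched along a link $L$; then $\Sigma(S(p,q))=L(p,q)$, $\Sigma(L_{1}\sharp L_{2})=\Sigma(L_{1})\#\Sigma(L_{2})$, and $\Sigma(\mathrm{unknot})=S^{3}$, so $\Sigma(S(p,q)\sharp S(r,s))=L(p,q)\# L(r,s)$. The mechanism behind Proposition~\ref{th1} is that an H(2)-move lifts, via the Montesinos trick, to a Dehn surgery on a knot in the double branched cover; in particular, when $u_{2}(L)=1$ the manifold $\Sigma(L)$ is obtained from $S^{3}$ by a Dehn surgery on a knot $\kappa\subset S^{3}$. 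I would use this forward implication for the ``only if'' part, and Proposition~\ref{main}, which supplies the converse in the cases we need, for the ``if'' part.

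\emph{The ``if'' direction.} If $S(r,s)=S(q,p)$, the two summands $S(p,q)$ and $S(q,p)$ fit the hypothesis of Proposition~\ref{main}: one of them is a 2-bridge link $S(q',p')$ and the other is the numerator closure of the $p'/q'$-rational tangle, hence a $(p',q')$-tangle unknotting number one link; therefore $u_{2}(S(p,q)\sharp S(q,p))=1$. If instead $S(p,q)=S(v,\epsilon)$ and $S(r,s)=S(vab+\epsilon,va^{2})$, the key step is to show that $S(vab+\epsilon,va^{2})$ is a $(vab+\epsilon,v)$-tangle unknotting number one link \emph{via a tangle decomposition with non-trivial outer tangle} (the obvious decomposition as the closure of the $(vab+\epsilon)/(va^{2})$-tangle would be the wrong one for our purpose). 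This can be seen by an explicit continued-fraction manipulation of the standard diagram, which locates a rational tangle of slope $(vab+\epsilon)/v$ whose replacement by the trivial tangle unknots the link; equivalently, by Moser's theorem $L(vab+\epsilon,va^{2})=L(vab+\epsilon,vb^{2})$ is $(vab+\epsilon)/v$-surgery on the torus knot $T_{a,b}$ with meridional filling $S^{3}$, and this surgery descends through the strong inversion of $T_{a,b}$ to the required tangle move. Since $vab+\epsilon\equiv\epsilon\pmod{v}$, we have $S(v,vab+\epsilon)=S(v,\epsilon)$, so Proposition~\ref{main} with $K_{1}=S(v,vab+\epsilon)=S(v,\epsilon)$ and $K_{2}=S(vab+\epsilon,va^{2})$ gives $u_{2}(S(v,\epsilon)\sharp S(vab+\epsilon,va^{2}))=1$.

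\emph{The ``only if'' direction.} Suppose $u_{2}(S(p,q)\sharp S(r,s))=1$, so $M:=L(p,q)\# L(r,s)$ is obtained from $S^{3}$ by a Dehn surgery on a knot $\kappa$. Since $M$ is reducible, by Gordon--Luecke the surgery coefficient is an integer, and by the classification of reducible Dehn surgeries on knots in $S^{3}$ (the cabling conjecture in the case where every summand of the reducible manifold is a lens space) $\kappa$ is either a torus knot $T_{m,n}$ or a cable of a torus knot, and the surgery is the cabling slope. Moser's formulae then identify the summands of $M$: in the torus-knot case $M\cong L(m,n)\# L(n,m)$, and in the cable case $M\cong L(v,\epsilon)\# L(vab+\epsilon,va^{2})$ for some $\epsilon=\pm1$ and integers $v,a,b$ with $(a,b)=1$ (using $L(v,vab+\epsilon)=L(v,\epsilon)$ and $L(vab+\epsilon,vb^{2})=L(vab+\epsilon,va^{2})$). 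The assignment $S(p,q)\mapsto L(p,q)$ is a bijection between 2-bridge links and lens spaces with $S(p,q)\cong S(p',q')$ (as unoriented links) if and only if $L(p,q)\cong L(p',q')$ (orientation-preservingly), so by uniqueness of prime decomposition $\{S(p,q),S(r,s)\}$ equals $\{S(m,n),S(n,m)\}$ or $\{S(v,\epsilon),S(vab+\epsilon,va^{2})\}$; in the first case, after choosing the representative of the determinant of the first summand appropriately, $S(n,m)=S(q,p)$, which is the first alternative, and the second case is the second alternative.

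\emph{Where the difficulty lies.} The hard part is the ``only if'' direction. First, it rests on knowing that a reducible surgery on a knot in $S^{3}$ producing a connected sum of two lens spaces can occur only on torus knots and their cables; I would invoke the relevant case of the cabling conjecture (Gordon--Luecke and later work), but if a clean citation is unavailable, one can instead use that the surgery descends from a band move---so $\kappa$ is strongly invertible---and carry the correction-term obstructions used in Proposition~\ref{th1} over to reducible targets. Second, translating the lens-space data back into the normal forms $S(q,p)$ and $S(vab+\epsilon,va^{2})$ requires care with the ambiguity $L(p,q)\cong L(p,q')$ whenever $q'\equiv q^{\pm1}\pmod{p}$ (and the corresponding Schubert/mirror ambiguity for the links): for each summand one must pick the representative of its determinant that makes the stated formula hold literally, the two families overlapping precisely in the degenerate cases $a=\pm1$ or $b=\pm1$.
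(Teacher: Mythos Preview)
Your overall architecture---pass to the double branched cover, invoke the cabling phenomenon, use Moser's formula and prime decomposition---matches the paper's proof. But two steps are not yet nailed down, and the paper handles them differently.

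First, the cabling conjecture is still open in general, and no special case ``when both summands are lens spaces'' is available as a black box. The paper avoids this by using the Montesinos trick more precisely: an H(2)-move lifts not merely to an \emph{integral} surgery on some knot $\kappa\subset S^{3}$, but to an integral surgery on a \emph{strongly invertible} knot. Eudave-Mu\~noz proved the cabling conjecture for strongly invertible knots, and that is the citation the paper invokes. You do mention the strongly invertible route as a fallback, but your suggestion to ``carry the correction-term obstructions used in Proposition~\ref{th1} over to reducible targets'' is not the right tool here; the input is Eudave-Mu\~noz's topological theorem, not Heegaard Floer correction terms.

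Second, once you know $\kappa$ is a cable $K_{u,v}$, you write directly that $\kappa$ is ``a torus knot or a cable of a torus knot''. The cabling conjecture by itself does not give this: it only says $\kappa$ is a cable of \emph{some} knot $K$, and $uv$-surgery on $\kappa$ splits as $S^{3}_{u/v}(K)\# L(v,u)$. The paper then observes that, matching summands, $S^{3}_{u/v}(K)$ must be a lens space with $|v|=|p|>1$, and invokes the Cyclic Surgery Theorem of Culler--Gordon--Luecke--Shalen to conclude that $K$ is a torus knot (possibly the unknot). You should insert this step explicitly; without it, ``cable of a torus knot'' is unjustified.

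With these two fixes---use strong invertibility plus Eudave-Mu\~noz for the cabling step, and CGLS for the companion---your argument becomes the paper's.
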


Notice that the 2-bridge link $S(p,q)$ is a $(p,q)$-tangle unknotting number one link, and that $S(vab+\epsilon, va^{2})$ is a $(\epsilon,v)$-tangle unknotting number one link. Our conjecture is supported when both summands are 2-bridge links.

In a previous paper \cite{bao}, we introduced a method of detecting whether a knot has H(2)-unknotting number one or not. The correction terms appearing in \cite{bao} are usually very challenging to calculate. When a knot $K$ is an alternating knot, there is a combinatorial formula for these correction terms. But in general, there is no practical rule to calculate them. Ozsv{\'a}th and Szab{\'o} \cite{MR2136532} used techniques related to plumbing manifolds and sharp manifolds to calculate these correction terms for some non-alternating knots. 

In Section~4, we want to apply the method in \cite{bao} to some tangle unknottig number one knots, which are usually non-alternating, without calculating their correction terms. Given a tangle unknotting number one knot $K$, our idea is to compare the correction terms of $K$ with those of certain 2-bridge knot, and to study $K$ via studying the 2-bridge knot. Note that 2-bridge knots are alternating and the correction terms for which are easy to calculate. A disadvantage of our method is it only works for some special tangle unknotting number one knots, rather than all of them. After introducing the theory, as an example, we show how to apply it to calculating the H(2)-unknotting numbers of some (23,3)-tangle unknotting number one knots.

\section{2-bridge links with H(2)-unknotting number one}

The 2-bridge links have been widely studied in knot theory. The 2-bridge link $S(p,q)$ to be used here is the link illustrated in Figure~\ref{fig:f3}. Here ${p}/{q}$ is the continued fraction $[a_{1}, a_{2},\cdots, a_{n}]$. Precisely $$\frac{p}{q}=[a_{1}, a_{2},\cdots, a_{n}]=a_{1}+\cfrac{1}{a_{2}+\cdots+\cfrac{1}{a_{n}}}.$$ Let $C(a_{1}, a_{2}, \ldots, a_{n})$ denote the link diagram in Figure~\ref{fig:f3}. Two unoriented links $K_{1}$ and $K_{2}$ are {\it equivalent} if there exists an orientation-preserving auto-homeomorphism of $S^{3}$ sending $K_{1}$ to $K_{2}$. The following fact is well-known: Two unoriented 2-bridge links $S(p_{1},q_{2})$ and $S(p_{2},q_{2})$ are equivalent if and only if $p_{1}=p_{2}$ and $q_{1}^{\pm}\equiv q_{2} \pmod{p_{1}}$.
\begin{figure}[h]
	\centering
		\includegraphics[width=0.7\textwidth]{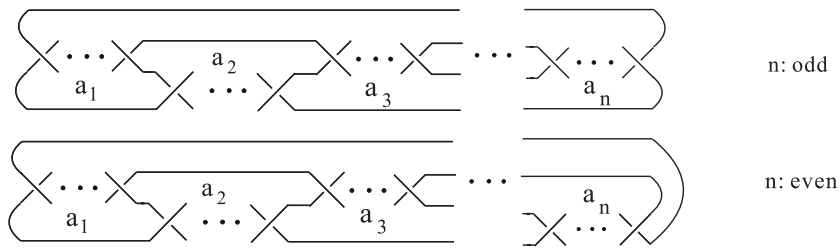}
	\caption{The link diagram $C(a_{1}, a_{2}, \ldots, a_{n})$ for 2-bridge links.}
	\label{fig:f3}
\end{figure}

The purpose of this section is to find out 2-bridge links whose H(2)-unknotting numbers are one. There is essentially no new theory here. We collect some known facts to get a necessary and sufficient condition for a 2-bridge link to have H(2)-unknotting number one, which is Proposition~\ref{th1}, and apply this observation to complement a table in \cite{MR2573402}, which we express as Corollary~\ref{table}. 


We use $S_{r}^{3}(K)$ to denote the 3-manifold obtained by doing Dehn surgery to the 3-sphere $S^{3}$ along the knot $K$ with coefficient $r$. Our orientation convention is the $p/q$-surgery along the trivial knot gives the lens space $L(p,q)$. An oriented knot $C$ is called {\it strongly-invertible} if there is an orientation preserving homeomorphism, which is also an involution, of $S^{3}$ which takes the knot to itself but reverses the orientation along the knot. Given a link $K$, let $\Sigma(K)$ denote the double branched cover of $S^{3}$ along $K$. It is well-known that the double branched cover of $S^{3}$ along the 2-bridge link $S(p, q)$ is the lens space $L(p,q)$. We have the following lemma.

\begin{lemma}
\label{mon}
The 2-bridge link $S(p,q)$ has H(2)-unknotting number one if and only if the lens space $L(p,q)$ is $S_{\pm p}^{3}(C)$ for some strongly-invertible knot $C$.
\end{lemma}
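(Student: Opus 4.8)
The plan is to reinterpret an H(2)-move on a link $K$ in terms of surgery on the double branched cover $\Sigma(K)$, using the standard correspondence between band surgeries (H(2)-moves) on $K$ and Dehn surgeries on a knot in $\Sigma(K)$. First I would recall the key geometric fact: an H(2)-move is performed by attaching a band $b$ to $K$; the band $b$ lifts to $\Sigma(K)$ to an arc whose preimage, together with the covering involution, determines a strongly-invertible knot $C \subset \Sigma(K)$ fixed setwise by the involution and reversed by it (the involution on $\Sigma(K)$ restricts to the hyperelliptic involution swapping the two lifts of a regular fiber). Performing the H(2)-move on $K$ then corresponds exactly to doing Dehn surgery on $C$ in $\Sigma(K)$, and because the band is unknotted the surgery coefficient is determined; the result of the surgery is $\Sigma(K')$, where $K'$ is the link obtained after the H(2)-move. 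This is the Montesinos-type trick, which is why the lemma is labeled \texttt{mon}.

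Second, I would specialize to the situation at hand. Suppose $u_2(S(p,q)) = 1$, so a single H(2)-move turns $S(p,q)$ into the unknot $U$. Applying the correspondence above gives a strongly-invertible knot $C$ in $\Sigma(S(p,q)) = L(p,q)$ such that some Dehn surgery on $C$ yields $\Sigma(U) = S^3$. Equivalently, running the surgery description backwards, there is a strongly-invertible knot $C' \subset S^3$ and a surgery slope producing $L(p,q)$; the task is to pin down the coefficient and show it is $\pm p$. For this I would use the homological constraint: $H_1(L(p,q)) = \mathbb{Z}/p\mathbb{Z}$, and the double branched cover of the unknot being $S^3$ forces the framing of the band, via a linking-number computation in the branched cover, to be the one giving surgery coefficient $\pm p$ on $C'$ (the sign ambiguity reflecting the orientation convention $L(p,q) = S^3_{p/q}(\text{unknot})$ and the two possible orientations of the band's core). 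The converse direction runs the same argument in reverse: given $L(p,q) = S^3_{\pm p}(C)$ with $C$ strongly-invertible, the quotient of the surgery by the involution on the pair $(S^3, C)$ realizes the surgery as an H(2)-move on the quotient link, which is $S(p,q)$ on one side and the unknot on the other, so $u_2(S(p,q)) \le 1$; nontriviality of $S(p,q)$ gives $u_2(S(p,q)) = 1$.

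The main obstacle I expect is the careful bookkeeping in the second step: verifying that the surgery coefficient is exactly $\pm p$ (and not some other slope with the same first homology, which for knots in $S^3$ is automatic for integral slopes but still needs the integrality to be justified from the band being unknotted), and checking that the strong invertibility of $C$ descends correctly so that the quotient is genuinely a link in $S^3$ with a well-defined H(2)-move between the two sides. A secondary subtlety is matching orientation conventions so that one gets $L(p,q)$ rather than $L(p,-q) = L(p, p-q)$; since $S(p,q)$ and $S(p, p-q) = S(p, q^{-1})$ need not be equivalent as unoriented links in general (they are, in fact, by the classification quoted above, since $q^{-1} \equiv q^{\pm} \pmod p$ is not what's needed — rather $p - q \equiv -q$), I would be careful to phrase the lemma, as the author does, with the $\pm p$ absorbing this ambiguity. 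Everything else is an application of the Montesinos trick together with the known identification $\Sigma(S(p,q)) = L(p,q)$.
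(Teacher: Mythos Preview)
Your forward direction is essentially the paper's: Montesinos' trick converts the H(2)-move into an integral surgery on a strongly-invertible knot in $S^3$, and the homology of $L(p,q)$ forces the coefficient to be $\pm p$. Fine.

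The gap is in the converse. You write that, given $L(p,q)=S^3_{\pm p}(C)$ with $C$ strongly invertible, ``the quotient of the surgery by the involution on the pair $(S^3,C)$ realizes the surgery as an H(2)-move on the quotient link, which is $S(p,q)$ on one side and the unknot on the other.'' The quotient construction certainly produces \emph{some} link $K$ with $u_2(K)=1$ and $\Sigma(K)\cong L(p,q)$, but nothing in your argument identifies $K$ with $S(p,q)$. For an arbitrary strongly-invertible $C$, the quotient link has no reason a priori to be $2$-bridge; the Montesinos trick only tells you its branched double cover is the right lens space. The paper closes this gap by invoking a nontrivial external result: $S(p,q)$ is the \emph{unique} link in $S^3$ whose branched double cover is $L(p,q)$ (Hodgson--Rubinstein). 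With that uniqueness in hand, the link $K$ produced by the quotient must be $S(p,q)$, and the converse follows.

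So your plan is the right shape, but the step ``the quotient link is $S(p,q)$'' is not automatic and is precisely where the $2$-bridge hypothesis is doing work. You should either cite the uniqueness theorem or supply an independent argument for it; without it, you have only shown that \emph{some} link with double branched cover $L(p,q)$ has $u_2=1$.
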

\begin{proof}
The proof is in fact a practice of Montesinos' trick \cite{MR0380802}. In general, if a link $K$ has H(2)-unknotting number one, then $\Sigma(K)$ equals $S_{p}^{3}(C)$ for some strongly-invertible knot $C$ and $|p|$ equals the absolute value of the determinant of $K$. For 2-bridge links, the converse is true as well. The reason is as follows. For a integer $p$ and a strongly-invertible knot $C$, one can always construct a link with H(2)-unknotting number one for which the double branched cover is $S_{p}^{3}(C)$. The double branched cover of $S^{3}$ along the 2-bridge link $S(p, q)$ is the lens space $L(p,q)$, and it is known \cite{MR823282} that there is no other links sharing the same double branched cover with $S(p,q)$. Therefore, if the lens space $L(p,q)$ equals $S_{\pm p}^{3}(C)$, the H(2)-unknotting number of the 2-bridge link $S(p,q)$ must be one.
\end{proof}

Since the set of strongly-invertible knots is too large, Lemma~\ref{mon} does not help us simplify the task of finding out 2-bridge links with H(2)-unknotting number one. On the other hand, there have been many studies about integral surgeries which produce lens spaces. In the following paragraphs, we assemble some of these studies and come up with a practical criterion, which is Proposition~\ref{th1}, for distinguishing 2-bridge links with H(2)-unknotting number one.

If some integral surgery of $S^{3}$ along a knot gives rise to a lens space, we say this knot {\it admits integral lens space surgery}. It is known that doubly-primitive knots admit integral lens space surgeries. Here is the definition. Given a loop in the boundary of a genus two handlebody, it is called {\it primitive} if attaching a 2-handle produces a solid torus. A knot in $S^{3}$ is called {\it doubly-primitive} if it sits on a genus two Heegaard surface of $S^{3}$, and is primitive in handlebodies on both sides. Berge \cite{Berge} found twelve classes of doubly-primitive knots, which are now called Berge knots. The lens spaces which arise as integral surgeries along Berge knots are listed as follows in \cite{rasmussen}:
\begin{theo}[Berge]
\label{berge}
The lens space $L(\alpha,\beta)$ arises as an integral surgery along a Berge knot if there exists an integer $k$ such that $\beta\equiv \pm k^{2} \pmod{\alpha}$, and $\alpha, \beta$ and $k$ satisfy one of the following conditions:
\begin{enumerate}
	\item $\alpha \equiv ik \pm 1 \pmod{k^{2}}$ and $\gcd(i,k)=1,2$ for some $i$;
	\item $\alpha \equiv \pm (2k+\epsilon)d \pmod{k^{2}}$, $d|k-\epsilon$ and $\frac{k-\epsilon}{d}$ is odd, for $\epsilon=\pm 1$;
	\item $\alpha \equiv \pm (k+\epsilon)d \pmod{k^{2}}$ and $d|2k-\epsilon$, for $\epsilon=\pm 1$;
	\item $\alpha \equiv \pm (k+\epsilon)d \pmod{k^{2}}$, $d|k+\epsilon$ and $d$ is odd, for $\epsilon=\pm 1$;
	\item $k^{2}\pm k \pm 1 \equiv 0 \pmod{\alpha}$;
	\item $\alpha=22j^{2}+9j+1$ and $k=11j+2$ for some $j$;
	\item $\alpha=22j^{2}+13j+2$ and $k=11j+3$ for some $j$.
\end{enumerate}
\end{theo}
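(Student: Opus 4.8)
\emph{Remark on scope.} Although Theorem~\ref{berge} is a catalogue result quoted from \cite{Berge,rasmussen}, the natural route to a proof is a direct verification carried out one family at a time. By definition a Berge knot is one of Berge's twelve families of doubly-primitive knots, so the enumeration of the twelve families is given to us; what has to be shown is that the lens spaces produced by the doubly-primitive (hence integral) surgeries on these knots are exactly the $L(\alpha,\beta)$ for which some integer $k$ exists with $\beta\equiv\pm k^{2}\pmod\alpha$ and one of (i)--(vii) holds. So the plan is: (1) recall the general mechanism by which a doubly-primitive surgery yields a lens space and extract a convenient set of parameters; (2) run this mechanism through the twelve families; (3) check that the resulting list of pairs $(\alpha,\beta)$ collapses precisely onto conditions (i)--(vii).

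For step~(1) I would argue as follows. If $K$ lies on a genus-two Heegaard surface $\Sigma=\partial H_{1}=\partial H_{2}$ of $S^{3}$ and is primitive in each $H_{i}$, then attaching a $2$-handle to $H_{i}$ along $K$ with the surface framing turns $H_{i}$ into a solid torus; gluing the two resulting solid tori shows that surgery on $K$ along the surface slope produces a lens space $L(\alpha,\beta)$, with $\alpha$ the absolute value of the surface framing (equivalently the order of $H_{1}$ of the lens space) and $\beta$ read off from the isotopy class of $K$ on $\Sigma$ through an elementary continued-fraction computation. The cleanest bookkeeping, which is the one used by Rasmussen, is to pass to the dual knot $K'\subset L(\alpha,\beta)$, the core of the regluing solid torus: it represents a class $k\in H_{1}(L(\alpha,\beta))\cong\mathbb{Z}/\alpha\mathbb{Z}$, and since $K$ was null-homologous in $S^{3}$ the linking form pins down $\beta\equiv\pm k^{2}\pmod\alpha$. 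This is where the integer $k$ in the statement originates, and it reduces the problem to: for which pairs $(\alpha,k)$ does one of the twelve families realize a dual knot of homology class $k$ in $L(\alpha,\pm k^{2})$?

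For steps~(2)--(3) I would group the families as follows. The families consisting of knots that sit in a standardly embedded solid torus in $S^{3}$ (the Berge--Gabai knots, realized inside a neighbourhood of a torus knot) split according to the arithmetic of the winding number together with a divisor $d$ of $k\pm1$ or of $2k\pm1$; these account for condition (i) (where the coarse constraint is only $\gcd(i,k)=1,2$, covering torus knots and their cables) and for conditions (ii), (iii), (iv) (the three divisibility patterns $d\mid k-\epsilon$, $d\mid 2k-\epsilon$, $d\mid k+\epsilon$). The families whose knots lie on the fibre surface of a trefoil or of the figure-eight produce the Fibonacci-type congruence $k^{2}\pm k\pm1\equiv0\pmod\alpha$, i.e.\ condition (v). The two remaining sporadic families give the explicit quadratic parametrizations $\alpha=22j^{2}+9j+1,\ k=11j+2$ and $\alpha=22j^{2}+13j+2,\ k=11j+3$, i.e.\ conditions (vi) and (vii). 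In every case the determination of $\beta\bmod\alpha$ amounts to evaluating a short continued fraction, or equivalently to solving the linear recursion coming from the associated genus-two Heegaard diagram.

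The main obstacle is bookkeeping rather than conceptual. One must carry the precise parametrization of each of the twelve families, fix orientation and framing conventions so that the signs in $\beta\equiv\pm k^{2}$ and in $\epsilon=\pm1$ come out consistently, and---most delicately---verify that (i)--(vii) capture the union over all families with neither omissions nor spurious entries. The subtlety here is that $L(\alpha,\beta)$ is unchanged under $\beta\mapsto\beta'$ whenever $\beta'\equiv\beta$ or $\beta\beta'\equiv1\pmod\alpha$, so a single family can contribute the same lens space under two apparently different values of $k$; this is exactly why the clean formulation quantifies over the \emph{existence} of some $k$ with $\beta\equiv\pm k^{2}$ rather than demanding an equality. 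Finally, I would stress what is \emph{not} needed here: we do not have to know that these are the only knots in $S^{3}$ admitting lens-space surgeries (Berge's conjecture). We only use the enumeration of the twelve doubly-primitive families---which we are taking as the definition of a Berge knot---together with the surgery computation above.
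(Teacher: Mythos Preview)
The paper does not prove Theorem~\ref{berge} at all: it is stated as a quotation from the literature (Berge's unpublished manuscript and Rasmussen's reformulation), introduced by the sentence ``The lens spaces which arise as integral surgeries along Berge knots are listed as follows in \cite{rasmussen}.'' There is therefore no argument in the paper to compare your proposal against.

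That said, your outline is a faithful high-level account of how the cited sources establish the list: the dual-knot viewpoint with $k$ the homology class of the surgery core in $L(\alpha,\beta)$, the relation $\beta\equiv\pm k^{2}\pmod\alpha$ forced by the linking form, and the family-by-family determination of the admissible $(\alpha,k)$. Your grouping of the twelve families into the solid-torus (Berge--Gabai) types yielding (i)--(iv), the fibred-surface types yielding (v), and the two sporadic families yielding (vi)--(vii) matches the standard organization. You are also right that Berge's conjecture plays no role here. The only caveat is that what you have written is a roadmap rather than a proof: the actual verification for each family is a nontrivial computation in the original sources, and your proposal does not carry it out. For the purposes of this paper, though, nothing more is expected---the theorem is simply cited.
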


The following fact is containd, though not directly stated, in \cite{MR2366182}.

\begin{theo}
\label{ichi}
The 2-bridge link $S(p,q)$ has H(2)-unknotting number one when the lens space $L(p,q)$ can be obtained as an integral surgery along a doubly-primitive knot in $S^{3}$.
\end{theo}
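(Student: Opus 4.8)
The plan is to deduce the statement from Lemma~\ref{mon} together with the single fact that doubly-primitive knots are strongly invertible. Suppose the lens space $L(p,q)$ is realized as an integral surgery $S_{n}^{3}(C)$ along a doubly-primitive knot $C$. For any knot and any nonzero integer $n$ one has $H_{1}(S_{n}^{3}(C))\cong\mathbb{Z}/n\mathbb{Z}$, whereas $H_{1}(L(p,q))\cong\mathbb{Z}/p\mathbb{Z}$; hence $|n|=|p|$, so in fact $L(p,q)=S_{\pm p}^{3}(C)$. If in addition $C$ is strongly invertible, Lemma~\ref{mon} applies and yields $u_{2}(S(p,q))=1$. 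Thus the whole statement reduces to showing that doubly-primitive knots are strongly invertible, which I would argue as follows.

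Since all genus-two Heegaard surfaces of $S^{3}$ are isotopic, we may take $C$ to lie on the standard genus-two Heegaard surface $F$, with $S^{3}=H\cup_{F}H'$ and $C$ primitive in each of $H$ and $H'$. The surface $F$ carries its hyperelliptic involution $\iota$, an orientation-preserving involution whose quotient is $S^{2}$ branched over six points. It is classical that $\iota$ extends over each of the two handlebodies to an orientation-preserving involution whose fixed-point set is a union of three properly embedded arcs, and that these two extensions agree on $F$; gluing them produces an orientation-preserving involution $\tau$ of $S^{3}$ with $\tau|_{F}=\iota$ and one-dimensional fixed-point set. Next I would isotope $C$ within $F$ so as to be $\iota$-invariant: up to the mapping class group of $F$ there are only two isotopy classes of essential simple closed curves, a separating one and a non-separating one, each admitting an $\iota$-invariant representative (the preimage under the branched covering of, respectively, a circle enclosing three branch points and an arc joining two of them); since $\iota$ is central in the mapping class group of $F$, the $\iota$-image of any essential simple closed curve is isotopic to that curve, and a curve isotopic to its image under the involution $\iota$ is isotopic to a genuinely $\iota$-invariant curve. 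Carrying out this isotopy on $F$ and extending it to an ambient isotopy of $S^{3}$ replaces $C$ by an equivalent knot $C'$ with $\tau(C')=C'$.

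To finish, I would note that $C'$ is not contained in the fixed-point set of $\tau$ (which meets $F$ in only six points), so the involution $\tau|_{C'}$ of the circle $C'$ has isolated fixed points and is therefore either a reflection or the antipodal map; in either case it reverses the orientation of $C'$. Hence $\tau$ is an orientation-preserving involution of $S^{3}$ taking $C'$ to itself and reversing its orientation, so $C'$, and therefore the equivalent knot $C$, is strongly invertible. Together with $L(p,q)=S_{\pm p}^{3}(C)$ this lets Lemma~\ref{mon} conclude. The one substantial ingredient — and where I expect the difficulty to lie — is the second paragraph: producing the ambient symmetry, that is, verifying that the hyperelliptic symmetry of $F$ extends compatibly over both handlebodies and that the doubly-primitive curve $C$ can be isotoped to be invariant under it. Once that is in hand, the homological computation of the surgery coefficient and the orientation-reversal of $\tau$ along $C'$ are routine, and the argument is essentially a repackaging of the material cited before the statement.
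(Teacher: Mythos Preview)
The paper does not supply its own proof of this theorem; it simply records it as a fact contained in \cite{MR2366182}. Your reduction via Lemma~\ref{mon} to the single claim that doubly-primitive knots are strongly invertible is exactly the natural route, and the outline you give for that claim (hyperelliptic involution of the genus-two surface, extended over both handlebodies, plus isotoping $C$ to an $\iota$-invariant curve) is the standard one. The structure is sound; there is one genuine slip in the last step.

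You assert that the antipodal map on $S^{1}$ reverses orientation. It does not: it is rotation by $\pi$, hence orientation-preserving. So the sentence ``either a reflection or the antipodal map; in either case it reverses the orientation of $C'$'' is false as written, and your argument does not yet exclude the possibility that $\tau|_{C'}$ is a free (orientation-preserving) involution. The fix is short: a doubly-primitive curve is primitive in $\pi_{1}(H)$, hence nonzero in $H_{1}(H)$, hence nonzero in $H_{1}(F)$, so $C'$ is non-separating. Since the hyperelliptic involution acts as $-\mathrm{id}$ on $H_{1}(F)$, we have $\iota_{*}[C']=-[C']\neq[C']$; if $\iota|_{C'}$ preserved orientation it would fix $[C']$. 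Therefore $\iota|_{C'}$ must reverse orientation (and is in fact a reflection with its two fixed points among the six Weierstrass points). With this correction your proof goes through, and the homological computation $|n|=|p|$ together with Lemma~\ref{mon} finishes it exactly as you say.
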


Berge \cite{berge2} proved that every doubly-primitive knot in $S^{3}$ is a Berge knot. Greene \cite{joshua} Proved the following result:

\begin{theo}[\cite{joshua}]
\label{joshua}
If a lens space is realized as an integral surgery along a knot in $S^{3}$, then it can be realized as an integral surgery along some Berge knot. 
\end{theo}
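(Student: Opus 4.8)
The plan is to translate the surgery hypothesis into a problem about embeddings of integral lattices, and then to solve that problem by combining Donaldson's diagonalization theorem with constraints coming from Heegaard Floer correction terms. Suppose the lens space $Y=L(p,q)$ is realized as $S^{3}_{p}(K)$ for some knot $K\subset S^{3}$, with $p>0$ after reversing orientation if necessary. Attaching a $2$-handle to $B^{4}$ along $K$ with framing $p$ produces a smooth $4$-manifold $W$ with $\partial W=Y$ whose intersection form is $(p)$. On the other hand, $L(p,q)$ bounds the canonical definite linear plumbing $X$ determined by the continued fraction expansion of $p/q$, whose intersection lattice is a linear lattice $\Lambda(p,q)$. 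First I would glue $W$ to (a copy of) $X$ along $Y$, with orientations arranged so that the closed manifold $Z$ obtained is smooth and definite, and then apply Donaldson's theorem to conclude that the intersection form of $Z$ is the standard diagonal form on $\mathbb{Z}^{N}$. This yields an isometric embedding of $\Lambda(p,q)$ into $\mathbb{Z}^{N}$ together with a distinguished vector $\sigma$ of square $p$, coming from the cocore of the $2$-handle, such that the image of $\Lambda(p,q)$ lies in the orthogonal complement $(\sigma)^{\perp}$; in fact one arranges that $\Lambda(p,q)\cong(\sigma)^{\perp}$.

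The next step is to pin down the coordinates of $\sigma$ in the diagonal basis. Here I would bring in the Ozsv\'ath--Szab\'o correction terms: the $d$-invariants of $L(p,q)$ can be computed intrinsically from the lens space, and also through the mapping cone (large surgery) formula applied to $K$, where they are governed by the torsion coefficients of the Alexander polynomial of $K$. Matching the two computations forces these torsion coefficients, and hence the sorted coordinates $\sigma_{1}\le\cdots\le\sigma_{n}$ of $\sigma$, to satisfy the \emph{changemaker} inequalities $\sigma_{i}\le 1+\sigma_{1}+\cdots+\sigma_{i-1}$. Thus $\sigma$ is a changemaker vector and $\Lambda(p,q)$ is realized as a changemaker lattice $(\sigma)^{\perp}$. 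At this point the analytic and gauge-theoretic inputs are exhausted, and the problem becomes purely combinatorial: to determine which linear lattices can occur as changemaker lattices.

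The final step is the classification. I would show that a linear lattice $\Lambda(p,q)$ is isomorphic to a changemaker lattice $(\sigma)^{\perp}$ if and only if the pair $(p,q)$ lies on Berge's list, as recorded in Theorem~\ref{berge}. For each changemaker vector $\sigma$ that produces a linear lattice one reads off the data $(p,q)$ and checks directly that it satisfies one of the seven conditions, and conversely that each Berge family yields such a $\sigma$. Since every lens space on Berge's list is by definition realized by an integral surgery along a Berge knot, this identifies $L(p,q)$ as such a surgery and completes the proof of Theorem~\ref{joshua}.

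I expect the classification of changemaker linear lattices to be the main obstacle: it requires a delicate inductive analysis of how the changemaker inequalities interact with the rigid combinatorics of linear lattices, and it is this step that carries essentially all the difficulty of the theorem. A secondary technical point to handle with care is the orientation and sign bookkeeping needed to guarantee that the glued manifold $Z$ is definite, together with the precise matching of correction terms that upgrades the mere lattice embedding to the changemaker condition; once these are in place, the reduction to Berge's list is forced.
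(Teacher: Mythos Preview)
The paper does not prove Theorem~\ref{joshua} at all: it is quoted as a result of Greene \cite{joshua} and used as a black box in deducing Proposition~\ref{th1}. So there is no ``paper's own proof'' to compare against.

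That said, your outline is a faithful high-level summary of Greene's actual argument in \cite{joshua}: the gluing of the trace of the surgery to the linear plumbing, the appeal to Donaldson's theorem to diagonalize, the use of $d$-invariants to force the changemaker condition on the distinguished vector $\sigma$, and the combinatorial classification of linear lattices arising as $(\sigma)^{\perp}$. You are also right that the last step---showing that every changemaker lattice which is linear corresponds to a pair $(p,q)$ on Berge's list---is where essentially all the work lies; in Greene's paper this occupies the bulk of the argument and is far from a routine check. As a sketch your proposal is sound, but you should be aware that what you have written is a roadmap for reproducing a substantial paper, not a proof that the present paper supplies or expects you to supply.
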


The proof of Proposition~\ref{th1} now easily follows from Theorems~\ref{ichi} and \ref{joshua}. Lens spaces which arise as integral surgeries along Berge knots have been completely listed in Theorem~\ref{berge}. Therefore, the corresponding 2-bridge links are those 2-bridge links whose H(2)-unknotting numbers are one. To proof Corollary~\ref{table}, we only need to show that these four 2-bridge knots belong to the list.


\begin{proof}[Proof of Corollary~\ref{table}]
Within this proof, we do not distinguish a knot from its mirror image. If the corresponding lens spaces belong to the list in Theorem~\ref{berge}, then Proposition~\ref{th1} tells us that the 2-bridge knots have H(2)-unknotting number one. 
For $9_{21}=S(43,25)$, we have $43=d(2k-1)\pmod{k^{2}}$ and $25=k^{2}$ for $k=5$ and $d=2$. It belongs to Berge type (ii).
For $9_{23}=S(45,64)$, we have $45=d(2k-1)\pmod{k^{2}}$ and $64=k^{2}$ for $k=8$ and $d=3$. It belongs to Berge type (ii).
For $9_{26}=S(47,81)$, we have $47=-d(2k-1)\pmod{k^{2}}$ and $81=k^{2}$ for $k=9$ and $d=2$. It belongs to Berge type (ii).
For $9_{31}=S(55,144)$, we have $55=d(k-1)\pmod{k^{2}}$ and $144=k^{2}$ for $k=12$ and $d=5$. It belongs to Berge type (iii).
\end{proof}

\section{Composite links with H(2)-unknotting number one}

In this section, we study composite links with H(2)-unknotting number one. We mainly focus on the proofs of Propositions~\ref{main} and \ref{main2}.
\begin{defn}
\label{def}
{\rm
A link $K$ is a $(p,q)$-tangle unknotting number one link if there is a tangle decomposition $K=T_{1}\cup T_{2}$ such that $T_{1}$ is the rational tangle as shown in Figure~\ref{fig:f4} and that $T_{0}\cup T_{2}$ is the unknot. Here $p/q$ is the continued fraction $[a_{1}, a_{2},\cdots, a_{n}]$. We call $T_{1}$ a rational tangle with Conway notation $(a_{1}, a_{2},\cdots, a_{n})$.}
\end{defn}
Note that the definition depends on not only $(p,q)$, but also the sequence of numbers $(a_{1}, a_{2},\cdots, a_{n})$. Our convention for Conway notation may be different from those in some references.
\begin{figure}[h]
	\centering
		\includegraphics[width=0.9\textwidth]{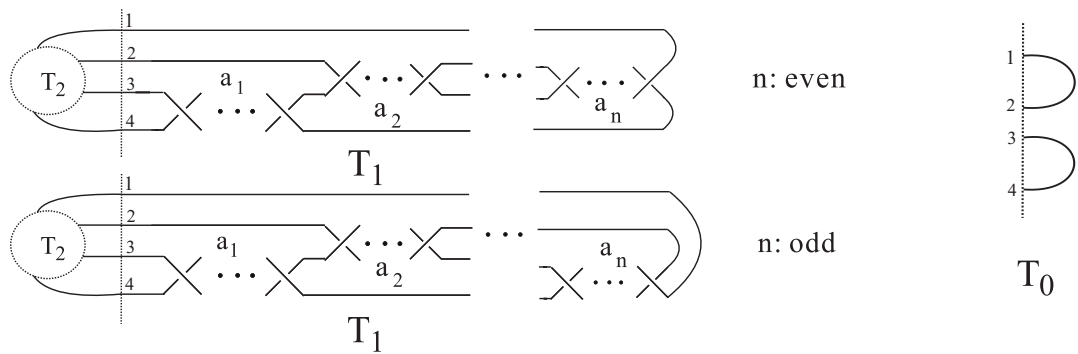}
	\caption{The tangle decomposition for a $(p,q)$-tangle unknotting number one knot.}
		\label{fig:f4}
\end{figure}

\begin{rem}
The notation ``tangle unknotting number one link" comes from the preprint \cite{Hedden}, but the definition here is slightly modified.
\end{rem}

\begin{lemma}
\label{lemma1}
If a link $K$ is a $(p,q)$-tangle unknotting number one link, then this link is also a $(p+aq,q)$-tangle unknotting number one link for any integer $a$. Furthermore, in this case there is an integer $a$ such that $\Sigma(K)=S_{(p+aq)/q}(C)$ for some strongly invertible knot $C$.
\end{lemma}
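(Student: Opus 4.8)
The plan is to exploit the freedom one has in choosing a continued-fraction expansion for the rational tangle $T_1$. Recall that the tangle decomposition $K = T_1 \cup T_2$ realizes $K$ as a $(p,q)$-tangle unknotting number one link when $T_1$ is the rational tangle with Conway notation $(a_1, a_2, \ldots, a_n)$ and $p/q = [a_1, a_2, \ldots, a_n]$. My first observation is that the rational tangle with Conway notation $(a_1 + a, a_2, \ldots, a_n)$ differs from $T_1$ only by adjoining $a$ extra horizontal (or integer) twists at the appropriate end; this does \emph{not} change the tangle $T_2$, nor does it change the property that $T_0 \cup T_2$ is the unknot, since $T_0$ (the trivial tangle) is unaffected. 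Since $[a_1 + a, a_2, \ldots, a_n] = (a_1+a) + \cfrac{1}{a_2 + \cdots} = \frac{p}{q} + a = \frac{p + aq}{q}$, this exhibits $K$ as a $(p+aq, q)$-tangle unknotting number one link for every integer $a$. This is the first sentence of the statement.

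For the second sentence, the strategy is Montesinos' trick, exactly as invoked in the proof of Lemma \ref{mon}. An H(2)-move upstairs (here, the move that replaces $T_1$ by $T_0$ and unknots $K$) lifts in the double branched cover to a single Dehn surgery along a knot $C$; because the covering involution restricts to $C$ and reverses its orientation, $C$ is strongly invertible. Concretely, $\Sigma(K)$ is obtained from $\Sigma(T_0 \cup T_2) = \Sigma(\text{unknot}) = S^3$ by removing the lift of a ball neighborhood of $T_0$ and regluing the lift of a ball neighborhood of $T_1$; the lift of the tangle ball is a solid torus, so this is a surgery. The point is to identify the surgery coefficient. Since $\Sigma$ of a rational tangle $(a_1,\ldots,a_n)$ is a solid torus glued so that the slope is governed by the continued fraction $[a_1,\ldots,a_n] = p/q$, and $\Sigma$ of the trivial tangle $T_0$ fills in the slope corresponding to $\infty$ (or $1/0$), the resulting surgery on $C \subset S^3$ has coefficient $p/q$ with respect to the Seifert framing of $C$ in $S^3$ — but this is only true after choosing the continued fraction expansion compatibly, i.e. after replacing $p/q$ by $(p+aq)/q$ for the correct $a$ so that the framing bookkeeping works out. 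That is precisely why the statement only claims the existence of \emph{some} integer $a$ with $\Sigma(K) = S_{(p+aq)/q}(C)$: the homeomorphism type of $\Sigma(K)$ is fixed, but expressing it as surgery on the \emph{unknot-complement-based} knot $C$ forces a specific normalization of the slope.

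I expect the main obstacle to be the careful bookkeeping of framings and orientation conventions in the second half: matching the "surgery coefficient $r$ such that $p/q$-surgery on the trivial knot gives $L(p,q)$" convention (fixed just before Lemma \ref{mon}) with the continued-fraction/tangle conventions fixed in Figure \ref{fig:f3} and Definition \ref{def}, and verifying that the ambiguity introduced by adding integer twists to the tangle is exactly absorbed by the framing change on $C$. The topological content — that an H(2)-move lifts to a surgery along a strongly invertible knot — is standard Montesinos, so the substance of the proof is really this normalization argument, which is why the integer $a$ appears. Once the coefficient is pinned down as $(p+aq)/q$ for the appropriate $a$, strong invertibility of $C$ comes for free from the covering involution, and the lemma follows.
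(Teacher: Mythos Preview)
Your second half is fine and matches the paper's approach: Montesinos' trick identifies the double branched cover of $(D^3,T_2)$ with a knot complement $S^3\setminus\nu(C)$, strong invertibility of $C$ comes from the covering involution, and the integer $a$ appears because the longitude $l_0$ of the solid torus lifting $(D^3,T_0)$ differs from the Seifert longitude $l$ of $C$ by some multiple of the meridian, $l_0=l+am$. The paper makes this explicit with meridian--longitude bookkeeping ($m_1=pm_0+ql_0=(p+aq)m+ql$), but your outline is the same argument.

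The first half, however, has a genuine gap. You write that replacing $T_1$ by the rational tangle with notation $(a_1+a,a_2,\ldots,a_n)$ ``does not change the tangle $T_2$''. But if $T_2$ is left alone, then $T_1^a\cup T_2$ is no longer $K$: the tangle $T_1^a$ is a \emph{different} tangle from $T_1$ (it represents $(p+aq)/q$, not $p/q$), so gluing it to the same $T_2$ produces a different link. What the paper does---and what your argument is missing---is to simultaneously modify $T_2$ to $T_2^a$ by inserting $-a$ half-twists at the two endpoints where the extra twists of $T_1^a$ are attached, so that $K=T_1^a\cup T_2^a$ still holds. The nontrivial point is then that $T_0\cup T_2^a$ is \emph{still} the unknot: this works because the trivial tangle $T_0$ connects exactly those two endpoints, so the $-a$ half-twists introduced into $T_2^a$ become twists in a single unknotted arc and can be removed by isotopy. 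Your sentence ``nor does it change the property that $T_0\cup T_2$ is the unknot'' is vacuous as written (of course $T_0\cup T_2$ is unchanged if $T_2$ is unchanged); the content that needs to be supplied is that $T_0\cup T_2^a$ is the unknot.
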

\begin{proof}
We first explain the former statement. Let $K=T_{2}\cup T_{1}$ be the tangle decomposition as in Definition~\ref{def} and $T_{1}$ be the rational tangle with notation $(a_{1}, a_{2},\cdots, a_{n})$. Given an integer $a$, let $T_{1}^{a}$ be the rational tangle with notation $(a_{1}+a, a_{2},\cdots, a_{n})$ and $T_{2}^{a}$ be the tangle obtained from $T_{2}$ by making $(-a)$ half twists along the endpoints $3$ and $4$. It is easy to see that $K=T_{1}^{a}\cup T_{2}^{a}$ and that $T_{2}^{a}\cup T_{0}$ is the unknot. Therefore $K$ is a $(p+aq,q)$-tangle unknotting number one link as well.

Now we prove the latter statement, which is again a practise of Montesinos' trick. Consider the tangle decomposition $$(S^{3}, T_{0}\cup T_{2})=(D^{3}, T_{0})\cup (D^{3}, T_{2}).$$
Since $T_{0}\cup T_{2}$ is the unknot, the double branched cover of $S^{3}$ along $T_{0}\cup T_{2}$ is still $S^{3}$. The double branched cover of $D^{3}$ along $T_{0}$ is a solid torus, which we denote $S_{0}$. Therefore, the manifold obtained by attaching $S_{0}$ to the double branched cover of $D^{3}$ along $T_{2}$ along their common torus boundary, is $S^{3}$. This implies that the double branched cover of $D^{3}$ along $T_{2}$ is the complement of a knot, say $C$, in $S^{3}$. In order to see that $C$ is strongly invertible, we notice that the image of $C$ in the base space $S^{3}$ is the dotted arc as shown in Figure~\ref{fig:f11}. It is easy to see that the preimage of this arc, which is $C$, is strongly invertible.

Next, we consider the tangle decomposition of $K$:
$(S^{3}, K)=(D^{3}, T_{1})\cup (D^{3}, T_{2})$.
Taking the double branched cover, we get
$$\Sigma(K)=S_{1}\cup (S^{3}\setminus \nu(C)),$$ where $S_{1}$ denotes the double branched cover of $D^{3}$ along $T_{1}$, which is a solid torus. Let $(m_{0}, l_{0})$, $(m_{1}, l_{1})$ and $(m,l)$ be the prefered meridian-longitudes of $S_{0}$, $S_{1}$ and $C$ respectively. Here we choose the same orientation for $S_{0}$ and $S_{1}$. We can see that $m_{1}=pm_{0}+ql_{0}$, while we already know that $m_{0}=m$ and $l_{0}=l+am$ for some integer $a$, so as a conclusion we have $m_{1}=(p+aq)m+ql$. That is to say $\Sigma(K)=S^{3}_{(p+aq)/q}(C)$. 
\end{proof}

\begin{figure}
	\centering
		\includegraphics[width=0.8\textwidth]{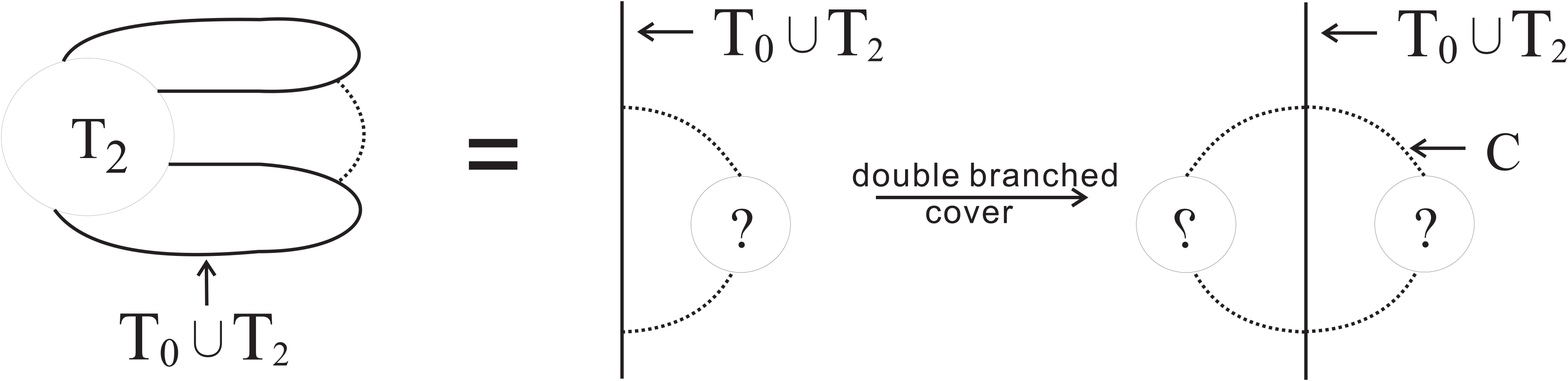}
		\caption{The knot $C$.}
	\label{fig:f11}
\end{figure}

\begin{proof}[Proof of Proposition~\ref{main}]
In fact, if $p/q=[a_{1}, a_{2},\cdots, a_{n}]$, then $S(q,p)$ is equivalent to $S(q,p-a_{1}q)$. Note that $q/(p-a_{1}q)=[a_{2},\cdots, a_{n}]$. Then  $C(a_{2},\cdots, a_{n})$ is a link diagram for $S(q,p)$. The composite link $K(p, q)\sharp S(q, p)$ can be unknotted by adding a band, as shown in Figure~\ref{fig:f5}.
\begin{figure}[h]
	\centering
		\includegraphics[width=1.0\textwidth]{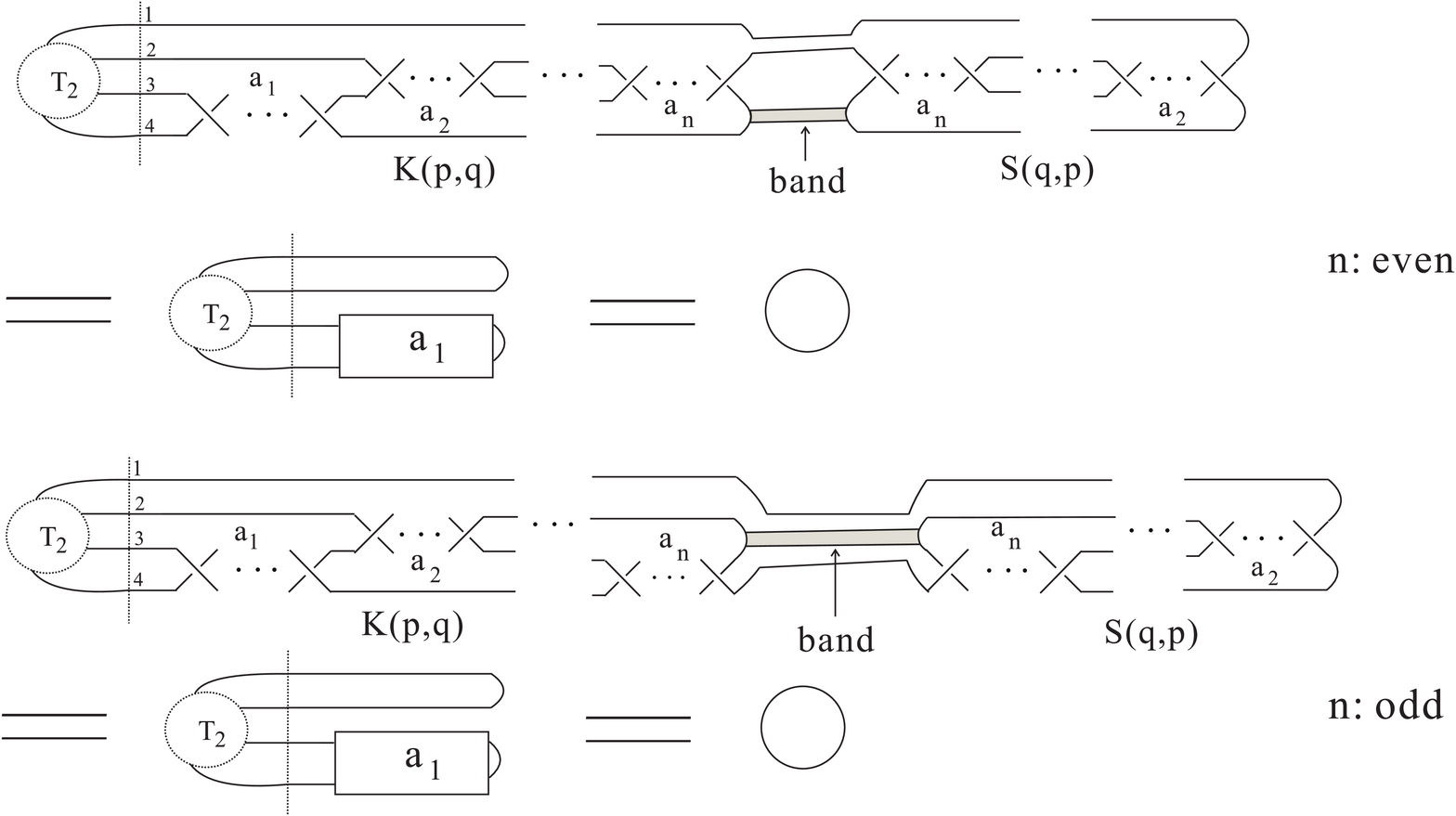}
	\caption{The H(2)-unknotting number of the composite link is one.}
		\label{fig:f5}
\end{figure}
This completes the proof.
\end{proof}


We conjecture that the converse of Proposition~\ref{main} is true:
\begin{con}
\label{q1}
A composite link with H(2)-unknotting number one always has the form described in Proposition~\ref{main2}.
\end{con}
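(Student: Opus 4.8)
The plan is to combine Montesinos' trick with the theory of reducible Dehn surgeries, reading the connected-sum decomposition of the link as a reducing sphere in the double branched cover. Suppose $K$ is a composite link with $u_{2}(K)=1$. By the results of Bleiler \cite{MR1045396} and Eudave-Mu\~{n}oz \cite{MR1112545} quoted in the introduction, I may write $K=K_{1}\sharp K_{2}$ with $K_{1}=S(q,p)$ a non-trivial 2-bridge link and $K_{2}$ a non-trivial prime link; the goal is to show $K_{2}$ is a $(p,q)$-tangle unknotting number one link, which is exactly the form asserted. Passing to the double branched cover and using that the cover of a connected sum is a connected sum,
\[
\Sigma(K)=\Sigma(K_{1})\#\Sigma(K_{2})=L(q,p)\,\#\,\Sigma(K_{2}),
\]
while Montesinos' trick \cite{MR0380802} (as recorded in the proof of Lemma~\ref{mon}) gives an equivariant identification $\Sigma(K)=S^{3}_{\pm d}(C)$ for a strongly invertible knot $C$, where $d=|\det K|=q\,|\det K_{2}|$. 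Since $K_{2}$ is non-trivial, $\Sigma(K_{2})\neq S^{3}$, so this integral surgery is reducible.

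Next I would feed this reducible surgery into the Cabling Conjecture of Gonz\'{a}lez-Acu\~{n}a and Short. Granting it (or its established cases), $C$ must be the $(a,b)$-cable of a companion knot $C_{0}$ and the surgery slope must be the cabling slope $ab$, whence
\[
\Sigma(K)=L(a,b)\,\#\,S^{3}_{b/a}(C_{0}).
\]
Because lens spaces are prime and the prime decomposition of a closed $3$-manifold is unique, comparison with the first display forces $L(a,b)=L(q,p)$ (so $|a|=q$) and $\Sigma(K_{2})=S^{3}_{b/a}(C_{0})$, with $|b|=|\det K_{2}|$ and $ab=\pm d$. Thus $\Sigma(K_{2})$ is realized as a surgery on $C_{0}$ whose slope has denominator $q$, matching the shape $(p+aq)/q$ appearing in Lemma~\ref{lemma1}.

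It remains to turn this surgery description of $\Sigma(K_{2})$ back into a tangle decomposition of $K_{2}$, i.e. to run the converse direction of Lemma~\ref{lemma1}. Here I would make the decomposition equivariant: using an equivariant sphere theorem I isotope the reducing $2$-sphere to be invariant under the covering involution $\tau$, so that the splitting descends to the decomposing sphere of $K=K_{1}\sharp K_{2}$ and simultaneously exhibits $C_{0}$ as strongly invertible. Quotienting the surgery solid torus by $\tau$ then recovers a rational tangle $T_{1}$, while the quotient of the companion exterior is a tangle $T_{2}$ with $T_{0}\cup T_{2}$ the unknot (the meridional filling of $C_{0}$ returns $S^{3}$). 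Reading off the gluing slope as in Lemma~\ref{lemma1} identifies $T_{1}$ with the $(p,q)$-tangle, so $K_{2}=T_{1}\cup T_{2}$ is a $(p,q)$-tangle unknotting number one link. As a consistency check, when $K_{2}$ is itself a 2-bridge link the companion $C_{0}$ is trivial and the resulting $(p,q)$-data reproduce exactly the two cases of Proposition~\ref{main2}.

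The main obstacle is the reducible-surgery step. The Cabling Conjecture is still open in general, so an unconditional proof cannot simply cite it; the genuine work is to establish the cabling structure in this highly constrained situation, ideally by proving an equivariant version for strongly invertible knots (where $\tau$ is available throughout) rather than the full conjecture. A secondary difficulty is that Definition~\ref{def} depends on the entire Conway sequence $(a_{1},\dots,a_{n})$, not merely on the homeomorphism type of $\Sigma(K_{2})$; recovering this combinatorial data faithfully from the equivariant surgery, reconciling the numerators of $b/a$ and $(p+aq)/q$ via the relation $L(a,b)=L(q,p)$, and verifying that the shift $p\mapsto p+aq$ of Lemma~\ref{lemma1} is the only remaining freedom, will require a careful analysis of how $\tau$ acts on the cabling annulus and the companion solid torus.
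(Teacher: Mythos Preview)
The statement you are trying to prove is recorded in the paper as a \emph{conjecture}; the paper offers no proof of it, only the supporting evidence of Proposition~\ref{main2} in the restricted case where both summands are 2-bridge. So there is no ``paper's own proof'' to compare your proposal against, and what you have written should be read as an outline of attack on an open problem.

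Your outline is the natural one and is essentially the strategy behind the paper's proof of Proposition~\ref{main2}. However, you have misidentified the main obstacle. The Cabling Conjecture for strongly invertible knots is \emph{already a theorem} of Eudave-Mu\~{n}oz---this is precisely the result the paper cites as \cite{MR1112545} immediately before the proof of Proposition~\ref{main2}. Since $C$ arises from Montesinos' trick and is therefore strongly invertible, the step ``$C$ is a cable and the slope is the cabling slope'' is unconditionally available; no new work is needed there.

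The genuine gap is what you downgrade to a ``secondary difficulty'': recovering a tangle decomposition of $K_{2}$ itself from the surgery description $\Sigma(K_{2})=S^{3}_{b/a}(C_{0})$. Two points are unresolved. First, nothing in your argument forces the companion $C_{0}$ to be strongly invertible, or the cabling annulus to be equivariant under the covering involution $\tau$; without this you cannot quotient to a tangle at all. Second, and more seriously, even if $C_{0}$ is strongly invertible, the Montesinos construction only produces \emph{some} link $K_{2}'$ with $\Sigma(K_{2}')=\Sigma(K_{2})$. In the 2-bridge case one then invokes Hodgson--Rubinstein \cite{MR823282} to conclude $K_{2}'=K_{2}$---this is exactly how the paper finishes the proof of Proposition~\ref{main2}---but for a general prime $K_{2}$ many inequivalent links can share the same double branched cover. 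Your equivariant-sphere-theorem sketch is aimed in the right direction, but it is only a sketch: you would need to show that the reducing sphere, the cabling annulus, and the companion solid torus can \emph{simultaneously} be made $\tau$-equivariant, so that the quotient tangle decomposition really is the given connected-sum decomposition of $K$. That is where the conjecture actually lives, and your proposal does not settle it.
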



We have the following corollary since the 2-bridge link $S(p,q)$ is a $(p,q)$-tangle unknotting number one link. It is in fact Theorem~9.1 in \cite{MR2573402}.
\begin{cor}
\label{cor}
The H(2)-unknotting number of the link $S(p, q)\sharp S(q, p)$ is one.
\end{cor}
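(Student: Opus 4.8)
The plan is to derive Corollary~\ref{cor} directly from Proposition~\ref{main} by exhibiting $S(p,q)\sharp S(q,p)$ as an instance of the general construction $K_1\sharp K_2$ with $K_1$ a $2$-bridge link and $K_2$ a tangle unknotting number one link. First I would note that the $2$-bridge link $S(p,q)$ is itself a $(p,q)$-tangle unknotting number one link: take the tangle decomposition $S(p,q)=T_1\cup T_2$ in which $T_1$ is the rational tangle with Conway notation $(a_1,\dots,a_n)$ coming from the continued fraction $p/q=[a_1,\dots,a_n]$ (the standard diagram $C(a_1,\dots,a_n)$ of Figure~\ref{fig:f3}), and $T_2$ is the trivial tangle that closes it up. Since replacing $T_1$ by the $0$-tangle $T_0$ in this decomposition yields the unknot, $S(p,q)$ satisfies Definition~\ref{def} with the pair $(p,q)$. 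This is really just unwinding the definitions, but it is the crux that lets us invoke the earlier proposition.

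Next I would apply Proposition~\ref{main} with $K_1=S(q,p)$ the $2$-bridge link and $K_2=S(p,q)$ viewed as a $(p,q)$-tangle unknotting number one link. The proposition then asserts immediately that the composite $K_1\sharp K_2 = S(q,p)\sharp S(p,q)$ has H(2)-unknotting number one; since connected sum of links is commutative up to equivalence, this is the same as $S(p,q)\sharp S(q,p)$, which is the statement. One should also observe $u_2\ge 1$ whenever the composite is non-trivial, so the equality $u_2=1$ (rather than just $\le 1$) holds in the non-trivial cases; in the trivial case the statement is vacuous or reduces to $u_2=0$.

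I do not expect any genuine obstacle here: the entire content is the bookkeeping identification of the role of $(p,q)$ versus $(q,p)$ in the two factors, i.e.\ making sure the continued-fraction conventions in Definition~\ref{def} and in the statement of Proposition~\ref{main} line up so that $S(p,q)$ really is a $(p,q)$-tangle unknotting number one link and not, say, a $(q,p)$-one. The only mild subtlety is that Proposition~\ref{main} is phrased with $K_1=S(q,p)$ and $K_2$ a $(p,q)$-tangle unknotting number one link, so one must check the indices are consistent with the convention that the double branched cover of $S(p,q)$ is $L(p,q)$ and with Lemma~\ref{lemma1}; once that is confirmed, the corollary is a one-line specialization. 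Alternatively, and perhaps more transparently, one can simply point to the explicit band move in Figure~\ref{fig:f5}, which already unknots $S(q,p)\sharp S(p,q)$ directly, so the corollary also follows by rereading the proof of Proposition~\ref{main} with $K_2$ taken to be a $2$-bridge link.
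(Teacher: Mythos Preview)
Your proposal is correct and follows essentially the same approach as the paper: the paper deduces the corollary by noting that the $2$-bridge link $S(p,q)$ is itself a $(p,q)$-tangle unknotting number one link and then invoking Proposition~\ref{main}. Your additional remarks about checking the $(p,q)$ versus $(q,p)$ conventions and the alternative of reading off the band move from Figure~\ref{fig:f5} are reasonable elaborations, but the core argument is identical.
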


Before giving the proof of Proposition~\ref{main2}, we introduce some facts. Given a knot $K$ and two coprime integers $p$ and $q$, we use $K_{p,q}$ to denote the $(p,q)$-cable knot of $K$. It is conjectured that (the cabling conjecture \cite{MR809502}) that if an integral surgery of $S^{3}$ along a knot $L$ produces a reducible manifold, then $L$ is a cable knot and the slope of the surgery is $pq$. This conjecture holds when $L$ is a strongly invertible knot \cite{MR1112545}.

\begin{proof}[Proof of Proposition~\ref{main2}]
If $S(p,q)\sharp S(r,s)$ has H(2)-unknotting number one, then there exists a strongly invertible knot $C$ and an integer $l$ such that $\Sigma(S(p,q)\sharp S(r,s))=L(p,q)\sharp L(r,s)=S^{3}_{l}(C)$. Since the cabling conjecture holds for strongly invertible knots, the knot $C$ must be a cable knot, say $C=K_{u,v}$ for some knot $K$ and coprime integers $u$ and $v$. Then we have $l=uv$ and $S^{3}_{l}(C)=S_{u/v}^{3}(K)\sharp L(v,u)$, which in turn equals $L(p,q)\sharp L(r,s)$. By the prime decomposition theorem for 3-manifolds, we can suppose $L(v,u)=L(p,q)$. Then $S_{u/v}^{3}(K)$ has to be $L(r,s)$. The cyclic surgery theorem \cite{MR881270} implies that if a non-integral Dehn surgery of $S^{3}$ along a knot produces a Lens space, then the knot is a torus knot. Since $|v|=|p|>1$, the fact $S_{u/v}^{3}(K)=L(r,s)$ implies that $K$ must be a torus knot.

If $K$ is the unknot, then $S_{u/v}^{3}(K)=L(u,v)=L(r,s)$. Therefore $S(r,s)$ is equivalent to $S(q,p)$. (In fact, $S(r,s)$ may be $S(q+jp,p)$ for some integer $j$, but in this case, we can write $S(p,q)$ as $S(p, q+jp)$.) If $K$ is non-trivial, suppose $K$ is the $(a,b)$-torus knot. Then $S_{u/v}^{3}(K)$ is a lens space only if $u=vab\pm 1$, and then $S_{u/v}^{3}(K)=L(vab\pm 1, va^{2})$. In this case, $S(p,q)=S(v,vab\pm 1)=S(v,\pm 1)$ and $S(r,s)=S(vab\pm 1, va^{2})$. 

The converse can be proved easily.
\end{proof}


From Propositions~\ref{main} and \ref{main2}, we have the following corollary:
\begin{cor}
\label{cor1}
\begin{enumerate}
	\item The H(2)-unknotting number of a $(p,q)$-tangle unknotting number one link is less than or equal to $u_{2}(S(q,p))+1$.
	\item $u_{2}(C(a_{1},a_{2}, \cdots,a_{n}))\leq u_{2}(C(a_{i},a_{i+1}, \cdots,a_{n}))+i-1$.
\end{enumerate}

\end{cor}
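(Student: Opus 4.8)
\textbf{Proof proposal for Corollary~\ref{cor1}.}

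The plan is to deduce both inequalities from Proposition~\ref{main}, together with the elementary fact that an H(2)-move is a local operation whose inverse is again an H(2)-move (band surgery along a band is undone by band surgery along a dual band). It follows that the least number of H(2)-moves needed to pass from a link $L_1$ to a link $L_2$ is a well-defined symmetric quantity $d(L_1,L_2)$ satisfying the triangle inequality, that $u_2(L)=d(L,U)$ where $U$ denotes the unknot, and that
$$d(L_1\sharp M,\ L_2\sharp M)\ \le\ d(L_1,L_2)\qquad\text{for every link }M,$$
since a sequence of H(2)-moves carrying $L_1$ to $L_2$ may be performed inside a ball meeting the connected-sum sphere in a trivial arc.

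For part (i), let $K$ be a $(p,q)$-tangle unknotting number one link. Writing $K=U\sharp K$ and applying the triangle inequality through the link $S(q,p)\sharp K$ gives
$$u_2(K)=d(K,U)\ \le\ d\big(U\sharp K,\ S(q,p)\sharp K\big)+d\big(S(q,p)\sharp K,\ U\big).$$
By the connected-sum inequality the first summand is at most $d(U,S(q,p))=u_2(S(q,p))$, and the second equals $u_2(S(q,p)\sharp K)$, which is $1$ by Proposition~\ref{main}; hence $u_2(K)\le u_2(S(q,p))+1$.

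For part (ii), I would first rewrite the two diagrams as 2-bridge links. If $p/q=[a_1,\dots,a_n]$ then $C(a_1,\dots,a_n)=S(p,q)$, while, exactly as in the proof of Proposition~\ref{main}, $C(a_2,\dots,a_n)=S(q,p-a_1q)=S(q,p)$. Moreover $S(p,q)$, realized through the expansion $[a_1,\dots,a_n]$, is a $(p,q)$-tangle unknotting number one link: take $T_1$ to be the rational tangle with Conway notation $(a_1,\dots,a_n)$ and $T_2$ the trivial tangle, so that $T_0\cup T_2$ is the unknot. Part (i) then yields $u_2(C(a_1,\dots,a_n))\le u_2(C(a_2,\dots,a_n))+1$, and iterating this $i-1$ times, peeling off $a_1,\dots,a_{i-1}$ one at a time, produces $u_2(C(a_1,\dots,a_n))\le u_2(C(a_i,\dots,a_n))+(i-1)$.

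The one point needing care is the locality claim behind the connected-sum inequality: one must verify that an H(2)-move is reversible and that the H(2)-moves building $S(q,p)$ from the unknot can be inserted into a summand of $U\sharp K$ without interacting with $K$. Both are routine once an H(2)-move is regarded as a band attachment, but since the whole argument rests on them they should be made explicit; the remaining steps are just continued-fraction bookkeeping.
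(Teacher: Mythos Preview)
Your argument is correct and is essentially what the paper has in mind: the paper gives no detailed proof, merely stating that the corollary follows from Propositions~\ref{main} and \ref{main2}, and your triangle-inequality deduction via the H(2)-distance (passing through $S(q,p)\sharp K$ and invoking Proposition~\ref{main}) is the natural way to unpack this, with part~(ii) then following by iteration exactly as you do. Note that, as your proof shows, only Proposition~\ref{main} is actually needed; Proposition~\ref{main2} plays no role, and the locality/reversibility of band surgery that you flag is indeed routine.
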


\begin{rem}
The 2-bridge link $S(vab+\epsilon, va^{2})$ in Proposition~\ref{main2} is an $(\epsilon, v)$-tangle unknotting number one link.
\end{rem}
\section{Tangle unknotting number one knots}
In Section~4.1, we recall some facts in Heegaard Floer homology for our discussions in Sections 4.2 and 4.3. In Section 4.2 we establish Relation (\ref{3}), which is the central result in Section~4. In Section 4.3 we apply it to calculating the H(2)-unknotting numbers of some tangle unknotting number one knots. 
\subsection{Preliminaries}
Almost all the ingredients contained in this subsection can be found in \cite{MR2136532}, or an earlier paper \cite{MR1957829}. But for intactness, we include them here. If $X$ is an oriented 3- or 4-manifold, the second cohomology $H^{2}(X,{\mathbb Z})$ acts on the set of spin$^{c}$-structures ${\rm Spin}^{c}(X)$ freely and transitively. Each spin$^{c}$-structure $s\in {\rm Spin}^{c}(X)$ has the first Chern class $c_{1}(s)\in H^{2}(X,{\mathbb Z})$, and the relation to the action is $c_{1}(s+h)=c_{1}(s)+2h$ for any $h \in H^{2}(X,{\mathbb Z})$. 

Let $Y$ be an oriented rational homology 3-sphere and $s$ be a spin$^{c}$-structure over $Y$. Then there is Heegaard Floer homology associated with the pair $(Y, s)$. In this note, we use Heegaard Floer homology with coefficients in the field ${\mathbb F}:={\mathbb Z}/2{\mathbb Z}$. There are several versions of this homology. One version is $HF^{+}(Y,s)$, which is a ${\mathbb Q}$-graded module over the polynomial algebra ${\mathbb F}[U]$. That is $$HF^{+}(Y, s)=\bigoplus_{i\in {\mathbb Q}}HF^{+}_{i}(Y,s),$$ where multiplication by $U$ lowers the grading by two. In each grading $i\in {\mathbb Q}$, $HF^{+}_{i}(Y,s)$ is a finite-dimensional ${\mathbb F}$-vector space. A simpler version is $HF^{\infty}(Y)$, and it satisfies $HF^{\infty}(Y,s)={\mathbb F}[U, U^{-1}]$ for each $s\in {\rm Spin}^{c}(Y)$ \cite[Theorem 10.1]{MR2113020}. It is also ${\mathbb Q}$-graded and multiplication by $U$ lowers its grading by two. 

There is a natural ${\mathbb F}[U]$-equivariant map $$\pi: HF^{\infty}_{i}(Y,s) \rightarrow HF^{+}_{i}(Y,s),$$ which is zero in all sufficiently negative gradings and an isomorphism in all sufficiently positive gradings. Note that $\pi$ preserves the $\mathbb Q$-grading. The map $\pi$ determines an invariant $d(Y,s)$, which is called the \textit{correction term} of the pair $(Y,s)$. Precisely $d(Y,s)$ is the minimal ${\mathbb Q}$-grading on which the map $\pi$ is non-zero. The correction terms for $Y$ and $-Y$, where $``-"$ means the reversion of orientation, are related by the formula $$d(-Y,s)=-d(Y,s)$$ under the natural identification ${\rm Spin}^{c}(Y)\cong {\rm Spin}^{c}(-Y)$.

The map $\pi$ behaves naturally under cobordisms. Let $Y_{1}$ and $Y_{2}$ be two oriented rational homology 3-spheres. We say a smooth connected oriented 4-manifold $X$ is a cobordism from $Y_{1}$ to $Y_{2}$ if the boundary of $X$ is given by $\partial X=(-Y_{1})\cup Y_{2}$. Suppose $X$ is a cobordism from $Y_{1}$ to $Y_{2}$ and $t$ is a spin$^{c}$-structure of $X$. Then there is a homomorphism $$F_{X,t}^{o}: HF^{o}(Y_{1}, s_{1})\rightarrow HF^{o}(Y_{2}, s_{2}),$$ where $HF^{o}$ denotes $HF^{+}$ or $HF^{\infty}$ and $s_{i}$ is the restriction of $t$ to $Y_{i}$ for $i=1,2$ (we simply express it as $s_{i}=t\bigm|_{Y_{i}}$). The map $\pi$ and the map $F_{X,t}^{o}$ fit into the following commutative diagram:
\begin{equation}
\label{eq3}
\begin{CD}
HF^{\infty}(Y_{1},s_{1}) @> F_{X,t}^{\infty} >> HF^{\infty}(Y_{2},s_{2})  \\
@V\pi_{1}VV @VV\pi_{2}V \\
HF^{+}(Y_{1},s_{1}) @> F_{X,t}^{+} >> HF^{+}(Y_{2},s_{2}).
\end{CD}
\end{equation}

When $X$ is a negative-definite 4-manifold it is shown in \cite{MR1957829} that 
\begin{eqnarray}
d(Y_{2}, t\bigm|_{Y_{2}})-d(Y_{1}, t\bigm|_{Y_{1}}) &\geq & \frac{c_{1}^{2}(t)-2\chi (W)-3\sigma (W)}{4},\label{1}\\
d(Y_{2}, t\bigm|_{Y_{2}})-d(Y_{1}, t\bigm|_{Y_{1}}) &=& \frac{c_{1}^{2}(t)-2\chi (W)-3\sigma (W)}{4} \pmod{2},\label{2}
\end{eqnarray}
where $\chi (W)$ is the Euler characteristic of $W$ and $\sigma (W)$ is the signature of $W$.
Both relations follow from the proof of \cite[Theorem 9.6]{MR1957829}, but they are not clearly stated. For readers' convenience, we explain them here. 
If $X$ is a negative-definite cobordism, the proof of Theorem 9.1 in \cite{MR1957829} (also mentioned in the proof of \cite[Proposition 9.9]{MR1957829}) tells us that $F_{X,t}^{\infty}$ is an isomorphism. There is an element $\xi\in HF^{\infty}(Y_{2}, t\bigm|_{Y_{2}})$ with the property that its $\mathbb Q$-grading $\operatorname{gr}(\xi)$ is $d(Y_{2}, t\bigm|_{Y_{2}})$. Suppose the preimage of $\xi$ in $HF^{\infty}(Y_{1}, t\bigm|_{Y_{1}})$ is $\eta$. Then by Equation (4) in \cite{MR1957829}, we have $$\operatorname{gr}(\xi)-\operatorname{gr}(\eta)=\frac{c_{1}^{2}(t)-2\chi (W)-3\sigma (W)}{4}=d(Y_{2},t\bigm|_{Y_{2}})-\operatorname{gr}(\eta).$$ 
By the definition of correction term, it is easy to see that $$\operatorname{gr}(\eta)\geq d(Y_{1},t\bigm|_{Y_{1}}).$$ Since $Y_{1}$ is an oriented rational homology 3-sphere, as an ${\mathbb F}$-vector space, we have (\cite[Theorem 10.1]{MR2113020}) $HF^{\infty}(Y_{1}, t\bigm|_{Y_{1}})=\bigoplus_{i=-\infty}^{\infty}{\mathbb F}_{(d+2i)}$, where $d=d(Y_{1},t\bigm|_{Y_{1}})$ and ${\mathbb F}_{(j)}$ denotes the summand supported on grading $j$. Therefore we have $$\operatorname{gr}(\eta)- d(Y_{1},t\bigm|_{Y_{1}})=0 \pmod{2}.$$ Now (\ref{1}) and (\ref{2}) follow from the argument above.

\subsection{Theory}
The purpose of this subsection is to prove Relation (\ref{3}), which will be applied in next subsection to calculate the H(2)-unknotting numbers of some tangle unknotting number one knots.
For a connected oriented rational homology 3-sphere $Y$, if the order of $H^{2}(Y, {\mathbb Z})$ is odd, there exists a group structure on the set ${\rm Spin}^{c}(Y)$ by identifying $s\in {\rm Spin}^{c}(Y)$ with $c_{1}(s)\in H^{2}(Y,{\mathbb Z})$. In this case, we also denote the correction term $d(Y,s)$ by $d(Y, c_{1}(s))$ if necessary. We have the following result about H(2)-unknotting number. We remark that the statement is modified slightly from the main theorem in \cite{bao}, but the correctness can be read out easily from the context.

\begin{theo}[\cite{bao}]
\label{bao}
Let $K$ be a knot and $p$ be the absolute value of the determinant of $K$. If $u_{2}(K)=1$, then there is a group isomorphism $\phi : {\mathbb Z}/p{\mathbb Z}\longrightarrow H^{2}(\Sigma(K); {\mathbb Z})$ and a sign $\epsilon =\pm 1$ with the properties that for all $i\in {\mathbb Z}/p{\mathbb Z}$:
\begin{eqnarray*}
I_{\phi, \epsilon}(i):=\epsilon\cdot d(\Sigma(K), \phi(i))+\frac{1}{4}(\frac{1}{p}(\frac{p+(-1)^{i}p}{2}-i)^{2}-1)&=&0 \pmod{2},\\
\text{and}\quad I_{\phi, \epsilon}(i) &\geq & 0.
\end{eqnarray*}
\end{theo}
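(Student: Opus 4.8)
The plan is to reduce this to the cobordism inequalities (\ref{1}) and (\ref{2}) established in Section~4.1, exactly in the spirit of Montesinos' trick already used in Lemmas~\ref{mon} and \ref{lemma1}. First I would observe that if $u_{2}(K)=1$, then the double branched cover $\Sigma(K)$ is obtained from $S^{3}$ by an integral surgery along a strongly invertible knot $C$, with surgery coefficient $\pm p$ where $p=|\det(K)|$; this is the general half of Montesinos' trick recorded in the proof of Lemma~\ref{mon}. Equivalently, $\Sigma(K)$ bounds a $4$-manifold $W_{0}$ obtained by attaching a single $2$-handle to $S^{3}\times[0,1]$ along $C$ with framing $\pm p$. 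This $W_{0}$ is (after a trivial modification, or by instead attaching the handle to $-S^{3}$) a negative-definite cobordism from $S^{3}$ (or more convenient: from the unknot side) to $\Sigma(K)$, and $\chi(W_{0})=1$, $\sigma(W_{0})=\mp 1$. The crucial point is that $H^{2}(W_{0};\mathbb{Z})\cong\mathbb{Z}$, generated by the cocore of the handle, and $H^{2}(W_{0};\mathbb{Z})\to H^{2}(\Sigma(K);\mathbb{Z})$ is the reduction map $\mathbb{Z}\to\mathbb{Z}/p\mathbb{Z}$; this is where the group isomorphism $\phi:\mathbb{Z}/p\mathbb{Z}\to H^{2}(\Sigma(K);\mathbb{Z})$ comes from, after fixing an identification, and the sign $\epsilon$ records whether the framing is $+p$ or $-p$ (equivalently the orientation of the cobordism).

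Next I would run the inequalities on $W_{0}$. A $\mathrm{spin}^{c}$-structure $t$ on $W_{0}$ is determined by $\langle c_{1}(t),[\text{cocore}]\rangle$, which ranges over an arithmetic progression of step $2$; parametrizing this integer appropriately by $i\in\mathbb{Z}/p\mathbb{Z}$ (with the parity of $i$ matched to the parity of $\langle c_{1}(t),[\hat C]\rangle$, which explains the $(-1)^{i}$ appearing in the formula), one computes $c_{1}^{2}(t)$ in terms of the intersection form, whose matrix is $(\pm p)$. A direct computation gives $c_{1}^{2}(t)=-\tfrac{1}{p}\big(\tfrac{p+(-1)^{i}p}{2}-i\big)^{2}$ up to the conventions, so that $\tfrac{1}{4}\big(c_{1}^{2}(t)-2\chi(W_{0})-3\sigma(W_{0})\big)$ becomes $-\tfrac{1}{4}\big(\tfrac{1}{p}(\tfrac{p+(-1)^{i}p}{2}-i)^{2}-1\big)$ (using $\chi=1$, $\sigma=\mp1$). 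Since $d(S^{3})=0$ and $d(\pm S^{3})=0$, the inequality (\ref{1}) applied to $W_{0}$ (with the appropriate orientation, giving the sign $\epsilon$ in front of $d(\Sigma(K),\phi(i))$) reads exactly $I_{\phi,\epsilon}(i)\geq 0$, and (\ref{2}) reads $I_{\phi,\epsilon}(i)\equiv 0\pmod 2$. That is the whole statement.

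The main obstacle, and the place where care is genuinely needed rather than routine bookkeeping, is the sign and parity bookkeeping: making the surgery coefficient $\pm p$ consistent with a \emph{negative-definite} cobordism (possibly one has to reverse orientations of both $S^{3}$ and $\Sigma(K)$, using $d(-Y,s)=-d(Y,s)$, which is precisely what produces the global sign $\epsilon$), correctly identifying the affine set of $\mathrm{spin}^{c}$-structures on $W_{0}$ with $\mathbb{Z}/p\mathbb{Z}$ so that the restriction map agrees with $\phi$, and checking that the closed-form expression for $c_{1}^{2}(t)$ over this one-dimensional lattice really is $-\tfrac1p(\tfrac{p+(-1)^{i}p}{2}-i)^{2}$ rather than some translate of it. One also has to verify that $W_{0}$ can be arranged to be negative-definite regardless of the sign of the determinant, possibly by blowing up or by choosing which of $\pm p$ to realize; since the statement only claims the existence of \emph{some} $\phi$ and \emph{some} $\epsilon$, this flexibility is exactly what is available. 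Finally I would cite Theorem~\ref{bao} (which is \cite{bao}) directly, since the problem explicitly allows assuming results stated earlier; in that case the ``proof'' is simply the remark that the statement is a slight reformulation of the main theorem of \cite{bao}, with the reformulation justified by the dictionary above.
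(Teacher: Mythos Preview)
The paper does not prove this theorem at all: it is quoted from the author's earlier paper \cite{bao}, with the sentence ``We remark that the statement is modified slightly from the main theorem in \cite{bao}, but the correctness can be read out easily from the context.'' So your final paragraph---simply citing \cite{bao} and noting that the formulation here is a mild repackaging---is exactly what the paper does, and nothing more is required.

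Your longer sketch (Montesinos' trick gives $\Sigma(K)=S^{3}_{\pm p}(C)$, hence a single $2$-handle attachment yields a definite cobordism $W_{0}$ from $S^{3}$ to $\pm\Sigma(K)$ with $b_{2}=1$; then apply (\ref{1}) and (\ref{2}) and compute $c_{1}^{2}(t)$ on the rank-one lattice with form $(\pm p)$) is indeed the shape of the argument in \cite{bao}, and is correct in outline. But since the present paper does not reproduce that argument, there is nothing to compare against beyond your last sentence.
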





\begin{figure}[h]
	\centering
		\includegraphics[width=0.9\textwidth]{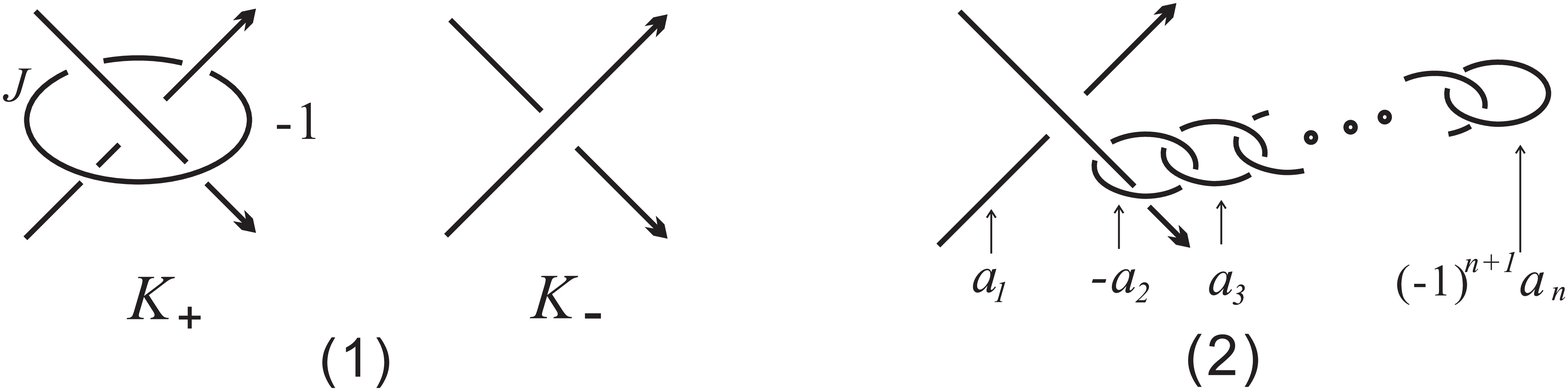}
		\caption{Kirby diagrams.}
	\label{fig:f7}
\end{figure}

Suppose $K_{+}$ and $K_{-}$ are two knots which differ only in a local neighborhood of a crossing, as shown in Figure~\ref{fig:f7}-(1) (Ignore the circle $J$ around the crossing). Consider the two manifolds $Y_{+}=S^{3}_{p/q}(K_{+})$ and $Y_{-}=S^{3}_{p/q}(K_{-})$, where $p$ is odd and $(p,q)=1$. There is a cobordism from $Y_{+}$ to $Y_{-}$ given by attaching a 2-handle to $Y_{+}\times [0,1]$ along the circle $J$ with framing $-1$, and we denote the cobordism by $W:Y_{+}\rightarrow Y_{-}$. Then we have the following property:
\begin{prop}
\label{pro}
The cobordism $W$ is a negative-definite cobordism from $Y_{+}$ to $Y_{-}$, and therefore:
\begin{eqnarray*}
d(Y_{-}, t_{-})-d(Y_{+}, t_{+}) &\geq & \frac{c_{1}^{2}(s)-2\chi (W)-3\sigma (W)}{4} \quad \text{and}\\
d(Y_{-}, t_{-})-d(Y_{+}, t_{+})&=& \frac{c_{1}^{2}(s)-2\chi (W)-3\sigma (W)}{4} \pmod{2},
\end{eqnarray*}
where $s$ is a spin$^{c}$-structure over $W$ with $s\bigm|_{Y_{*}}=t_{*}$ for $*=+,-$.
\end{prop}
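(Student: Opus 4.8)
The plan is to verify that $W$ is negative-definite and then to invoke Inequalities (\ref{1}) and (\ref{2}) directly. The cobordism $W$ is built by attaching a single $2$-handle to $Y_{+}\times[0,1]$ along the curve $J$ with framing $-1$, so algebraically it is simple to analyze. First I would compute the homology: since $Y_{+}$ is a rational homology $3$-sphere, the only new second homology in $W$ (relative to its ends) comes from the cocore/core of the attached $2$-handle, so $b_{2}(W)\le 1$, and the intersection form is represented by the single framing number once we check that $J$ is null-homologous in $Y_{+}$. This is where the hypothesis on the crossing enters: $J$ is the small circle encircling the two strands at the crossing site of $K_{\pm}$, so in $S^{3}$ it bounds a disk meeting $K_{+}$ (hence $K_{-}$) in two points of opposite sign; consequently $[J]=0$ in $H_{1}(S^{3}\setminus K_{+})$ and therefore $[J]=0$ in $H_{1}(Y_{+})$, which is exactly the condition needed for the $2$-handle to contribute a rank-one piece to $H_{2}(W,\partial W)$ on which the intersection form is $(-1)$. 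This forces $W$ to be negative-definite, with $\chi(W)=1$ and $\sigma(W)=-1$.

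Second, I would confirm the identification of the two boundary components. Sliding $K_{+}$ over the $(-1)$-framed $2$-handle attached along $J$ effects exactly one crossing change, turning $K_{+}$ into $K_{-}$ while leaving the surgery coefficient $p/q$ unchanged (a standard Kirby-calculus observation: a $\pm1$-framed unknot linking two strands is a Rolfsen/blow-down twist region, and blowing it down introduces a full twist on those strands — here the $(-1)$ framing on the small circle $J$ realizes a single crossing change). Hence $S^{3}_{p/q}(K_{+})$ with this $2$-handle attached has outgoing boundary $S^{3}_{p/q}(K_{-})=Y_{-}$, so $W$ is genuinely a cobordism from $Y_{+}$ to $Y_{-}$ as claimed. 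The requirement that $p$ is odd and $(p,q)=1$ guarantees both $Y_{+}$ and $Y_{-}$ are rational homology spheres with $H^{2}$ of odd order, so the correction terms $d(Y_{\pm},\cdot)$ are defined and the discussion of Relations (\ref{1}) and (\ref{2}) applies verbatim.

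Finally, with $W$ a negative-definite cobordism between rational homology $3$-spheres, I would simply apply Inequalities (\ref{1}) and (\ref{2}) from the Preliminaries to the cobordism $W$ and any spin$^{c}$-structure $s$ on $W$ restricting to $t_{\pm}$ on $Y_{\pm}$, reading off $d(Y_{-},t_{-})-d(Y_{+},t_{+})\ge \frac{1}{4}(c_{1}^{2}(s)-2\chi(W)-3\sigma(W))$ and the corresponding congruence mod $2$. No further computation is required once negative-definiteness is established.

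The main obstacle, modest as it is, will be the homological bookkeeping in the first paragraph: one must argue carefully that $[J]=0$ in $H_{1}(Y_{+};\mathbb{Z})$ (not merely in $H_{1}(S^{3}\setminus K_{+})$, though the former follows from the latter since $Y_{+}$ is obtained by Dehn filling), and that the resulting intersection form on $H_{2}(W)$ — or more precisely the relevant quotient/summand — is exactly the $1\times 1$ matrix $(-1)$ rather than something degenerate. Once the intersection form is pinned down as negative-definite, the rest is a direct citation of (\ref{1}) and (\ref{2}).
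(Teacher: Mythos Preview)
Your proposal is correct and follows the same overall strategy as the paper: show that $H_{2}(W;\mathbb{Z})\cong\mathbb{Z}$ with intersection form $(-1)$, conclude $W$ is negative-definite, and then invoke (\ref{1}) and (\ref{2}).

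The only real difference is in how $H_{2}(W)=\mathbb{Z}$ is established. The paper builds an auxiliary plumbing $4$-manifold $W_{+}$ with $\partial W_{+}=Y_{+}$ (using a continued-fraction expansion of $p/q$), sets $X=W_{+}\cup W$, and reads off $H_{2}(W)=\mathbb{Z}$ from the Mayer--Vietoris sequence for this decomposition; it then exhibits the generator as a Seifert surface for $J$ in $S^{3}$, disjoint from $K_{+}$, capped by the core of the $2$-handle. Your route is more direct: you observe $\mathrm{lk}(J,K_{+})=0$, hence $[J]=0$ in $H_{1}(S^{3}\setminus K_{+})$ and therefore in $H_{1}(Y_{+})$, so the long exact sequence of the pair $(W,Y_{+})$ immediately gives $H_{2}(W)\cong H_{2}(W,Y_{+})=\mathbb{Z}$ with self-intersection equal to the framing $-1$. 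Your argument is the standard one for a single $2$-handle attached along a null-homologous curve and avoids the auxiliary plumbing; the paper's computation is slightly more roundabout but has the virtue of making the rank count completely explicit without appealing to the pair sequence. Either way the remaining step is just a citation of (\ref{1}) and (\ref{2}), exactly as you say.
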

\begin{proof}
First we prove $H_{2}(W; {\mathbb Z})={\mathbb Z}$ by the following argument. Let $W_{+}$ be the plumbing manifold obtained by attaching 2-handles to a four-ball along the framed link in Figure~\ref{fig:f7}-(2). The framings come from that $p/q$ equals the continued fraction $[a_{1}, a_{2}, \cdots, a_{n}]$. By the slam-dunk move in Figure~\ref{fig:f8}, we see the boundary of $W_{+}$ is indeed $Y_{+}$. Then we let $X=W_{+}\cup W$, and clearly we have $H_{2}(X; {\mathbb Z})={\mathbb Z}^{n+1}$ and $H_{2}(W_{+}; {\mathbb Z})={\mathbb Z}^{n}$. Consider the Mayer-Vietoris sequence applied to the decomposition $X=W_{+}\cup W$. In this case $W_{+}\cap W=S^{3}_{p/q}(K_{+})=Y_{+}$, so we have the following sequence for integral homology groups:
\begin{eqnarray*}
\cdots\rightarrow H_{2}(Y_{+})\rightarrow H_{2}(W_{+})\oplus H_{2}(W)\rightarrow H_{2}(X)\rightarrow H_{1}(Y_{+})\rightarrow\cdots.
\end{eqnarray*}
Since $ H_{2}(Y_{+})=0$, $H_{2}(W_{+})={\mathbb Z}^{n}$, $H_{2}(X)={\mathbb Z}^{n+1}$ and $H_{1}(Y_{+})={\mathbb Z}/p{\mathbb Z}$, we have $H_{2}(W)={\mathbb Z}$. 

It is easy to see that there exists a Seifert surface of $J$ in $S^{3}$ which is disjoint with $K_{+}$. Let $\alpha \in H_{2}(W; {\mathbb Z})$ be the homology class of this Seifert surface of $J$ capped off by the core disk of the two-handle attached along $J$. Then $\alpha$ is a generator of $H_{2}(W; {\mathbb Z})$, and we have $\alpha^{2}=-1$, which implies that the cobordism $W$ is negative-definite. By (\ref{1}) and (\ref{2}), we complete the proof of the proposition.
\end{proof}

\begin{rem}
The idea of the proof comes from the proof of Theorem 1.3 in \cite{thomas}.
\end{rem}

\begin{figure}
	\centering
		\includegraphics[width=0.5\textwidth]{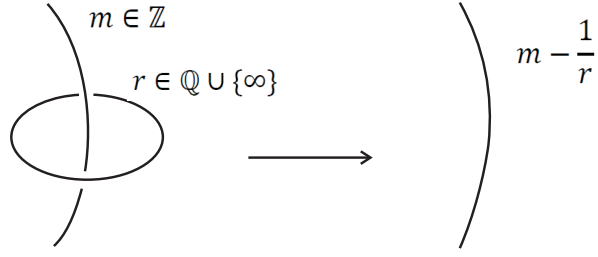}
		\caption{The slam-dunk move.}
	\label{fig:f8}
\end{figure}

In the following two paragraphs, we figure out when the pair of spin$^{c}$-structures $(t_{+}, t_{-})\in \operatorname{Spin}^{c}(Y_{+})\times \operatorname{Spin}^{c}(Y_{-})$ can be realized as the restriction of a spin$^{c}$-structure over $W$.

Note that $W$ is constructed by attaching a 2-handle $D^{4}=D^{2}\times D^{2}$ to $Y_{+}\times [0,1]$ along a solid torus $S^{1}\times D^{2}=(\partial D^{2})\times D^{2}$.
By considering the Mayer-Vietoris Sequence associated with the triple $(S^{1}\times D^{2}, Y_{+}\times [0,1]\coprod D^{4}, W)$, we have
$$0\rightarrow H_{2}(W)\rightarrow H_{1}(S^{1}\times D^{2})={\mathbb Z}\stackrel{f}{\rightarrow} H_{1}(Y_{+}\times [0,1])\oplus H_{1}(D^{4})={\mathbb Z}/p{\mathbb Z} \rightarrow H_{1}(W)\rightarrow 0.$$ It is easy to see that $H_{1}(S^{1}\times D^{2})$ is generated by a longitude of $J$ in Figure~\ref{fig:f7}, whose homology class in $H_{1}(Y_{+}\times [0,1])$ is zero. Therefore the map $f$, which is induced by the inclusion map, is trivial. Therefore we have $H_{1}(W)={\mathbb Z}/p{\mathbb Z}$. By the universal coefficient theorem, we have $H^{2}(W)={\mathbb Z}\oplus {\mathbb Z}/p{\mathbb Z}$.  
Noting that $\partial W=(-Y_{+})\cup Y_{-}$, we have the following exact sequence with respect to the pair $(W, \partial W)$:
\begin{multline*}
0\rightarrow H_{2}(W)={\mathbb Z}\stackrel{\tau}{\rightarrow}H^{2}(W)={\mathbb Z}\oplus {\mathbb Z}/p{\mathbb Z}\stackrel{\alpha}{\rightarrow}H^{2}(-Y_{+})\oplus H^{2}(Y_{-})={\mathbb Z}/p{\mathbb Z}\oplus {\mathbb Z}/p{\mathbb Z}\\ \stackrel{\beta}{\rightarrow} H_{1}(W)={\mathbb Z}/p{\mathbb Z}\rightarrow 0.
\end{multline*}
Let $m_{+}\subset Y_{+}$ be a meridian of $K_{+}$ and $m_{-}\subset Y_{-}$ be the image of $m_{+}$ after the Dehn surgery along $J$. Then $[m_{+}]$ and $[m_{-}]$ are generators of $H_{1}(Y_{+})$ and $H_{1}(Y_{-})$ respectively. We identify $H^{2}(Y_{+})$ with $H_{1}(Y_{+})$ and $H^{2}(Y_{-})$ with $H_{1}(Y_{-})$ by Poincar{\'e} duality. Then we have $\beta(-[m_{+}], [m_{-}])=0 \in H_{1}(W)$.

The set Spin$^{c}$(W) is an affine space over $H^{2}(W; {\mathbb Z})={\mathbb Z}\oplus {\mathbb Z}/p{\mathbb Z}$. Given a pair of spin$^{c}$-structures $t_{+}\in {\rm Spin}^{c}(Y_{+})$ and $t_{-}\in {\rm Spin}^{c}(Y_{-})$, a spin$^{c}$-structure $s\in {\rm Spin}^{c}(W)$ satisfies $s\bigm|_{Y_{*}}=t_{*}$ for $*=+,-$ if and only if $\alpha(c_{1}(s))=(-c_{1}(t_{+}), c_{1}(t_{-}))$. From the exactness of the sequence, the element $(-i,i):=(-i[m_{+}], i[m_{-}])\in H^{2}(-Y_{+})\oplus H^{2}(Y_{-})$ stays in the image of the map $\alpha$ for $0\leq i \leq p-1$. Let $t_{+}\in {\rm Spin}^{c}(Y_{+})$ and $t_{-}\in {\rm Spin}^{c}(Y_{-})$ be the spin$^{c}$-structures for which $(-c_{1}(t_{+}), c_{1}(t_{-}))=(-i,i)$. Then $(t_{+}, t_{-})$ is the restriction of some spin$^{c}$-structures over $W$ to $(Y_{+}, Y_{-})$. 

We use $d(Y_{-}, i)$ (resp. $d(Y_{+}, i)$) to denote $d(Y_{-}, t_{-})$ (resp. $d(Y_{+}, t_{+})$). In the following, we want to show that 
\begin{equation}
\label{3}
\begin{array}{l}
d(Y_{-}, i)-d(Y_{+}, i)\geq 0\quad \text{and}\\
d(Y_{-}, i)-d(Y_{+}, i)=0 \pmod{2},
\end{array}
\end{equation}
for any $i\in {\mathbb Z}/p{\mathbb Z}$. By (\ref{1}) and (\ref{2}), we have the following relations:
\begin{equation*}
\begin{array}{l}
d(Y_{-}, i)-d(Y_{+}, i)\geq  \frac{c_{1}^{2}(s)-2\chi(W)-3\sigma(W)}{4}\quad \text{and}\\
d(Y_{-}, i)-d(Y_{+}, i)= \frac{c_{1}^{2}(s)-2\chi(W)-3\sigma(W)}{4} \pmod{2},
\end{array}
\end{equation*}
for any spin$^{c}$-structure $s$ over $W$ whose restriction to $(Y_{+}, Y_{-})$ is $(t_{+}, t_{-})$. Note that $\chi(W)=1$ and $\sigma(W)=-1$. Then we can prove (\ref{3}) if we can prove that \begin{equation}\label{4}\max\left\{c_{1}^{2}(s)\left|s\in \operatorname{Spin}^{c}(W), s\bigm|_{Y_{*}}=t_{*} \text{ for } *=+,-\right.\right\}=-1.\end{equation}

We say $\xi\in {\mathbb Z}$ is a {\it characteristic element} of the matrix $(-1)$ if $\xi$ is odd. Let $t_{+,0}$ and $t_{-,0}$ denote the spin$^{c}$-structures whose first Chern classes are trivial, over $Y_{+}$ and $Y_{-}$ respectively. We define a set $H:=\left\{s\in \operatorname{Spin}^{c}(W)\left| s\bigm|_{Y_{*}}=t_{*,0} \text{ for } *=+,-\right.\right\}$. Then the first Chern classes of elements in $H$ belong to the free part of $H^{2}(W; {\mathbb Z})={\mathbb Z}\oplus {\mathbb Z}/p{\mathbb Z}$. Conversely any spin$^{c}$-structure over $W$ whose first Chern class belongs to the free part of $H^{2}(W; {\mathbb Z})$, belongs to $H$. The set $\left\{c_{1}(s)\left|s\in H \right.\right\}$ is equal to the set of characteristic elements of the matrix $(-1)$. If $c_{1}(s)$ corresponds to the characteristic element $\xi$, then we have $c_{1}^{2}(s)=-\xi^{2}$. It is easy to calculate that $\max\left\{c_{1}^{2}(s)\left|s\in H\right.\right\}$ is $-1$. 

Note that $H^{2}(W; {\mathbb Z})$ acts transtively and freely on the set $\operatorname{Spin}^{c}(W)$. Any spin$^{c}$-structure over $W$ can be transformed into a spin$^{c}$-structure in $H$ by a torsion element of $H^{2}(W; {\mathbb Z})$. We know that for a spin$^{c}$-structure $s$ and a torsion element $a\in H^{2}(W; {\mathbb Z})={\mathbb Z}\oplus {\mathbb Z}/p{\mathbb Z}$ there is $c_{1}(s+a)^{2}=c_{1}(s)^{2}$.  Therefore Relation (\ref{4}) is true, which in turn implies (\ref{3}).
 
\subsection{Application}

We show that (\ref{3}) can be used to study the H(2)-unknotting numbers of some tangle unknotting number one knots. Let us consider some (23,3)-tangle unknotting number one knots as examples. For the 2-bridge knot $S(23,3)$, the ordered set of the correction terms of its double-branched cover, the lens space $L(23,3)$, is given as follows (for the calculation, refer to \cite[Proposition 3.2]{MR2136532}). Here we identify Spin$^{c}$(L(23,3)) with ${\mathbb Z}/23{\mathbb Z}$.
\begin{eqnarray*}
\{d(L(23,3),i)\}_{i=0}^{22}=\left\{ \frac{3}{2}, \frac{85}{46}, \frac{41}{46}, \frac{29}{46}, \frac{49}{46}, \frac{9}{46}, \frac{1}{46}, \frac{25}{46}, \frac{-11}{46}, \frac{-15}{46}, \frac{13}{46}, \frac{-19}{46}, \frac{-19}{46}\right.,\\
\left.\frac{13}{46}, \frac{-15}{46}, \frac{-11}{46}, \frac{25}{46}, \frac{1}{46}, \frac{9}{46}, \frac{49}{46}, \frac{29}{46}, \frac{41}{46}, \frac{85}{46}\right\}.
\end{eqnarray*}
On the other hand, define $f(i)=\frac{1}{4}(\frac{1}{p}(\frac{p+(-1)^{i}p}{2}-i)^{2}-1)$ for $p=23$ and any $0 \leq i \leq 22$. Then we have the following ordered set:
\begin{eqnarray*}
\{f(i)\}_{i=0}^{22}=\left\{ \frac{11}{2}, \frac{-11}{46}, \frac{209}{46}, \frac{-7}{46}, \frac{169}{46}, \frac{1}{46}, \frac{133}{46}, \frac{13}{46}, \frac{101}{46}, \frac{29}{46}, \frac{73}{46}, \frac{49}{46}, \frac{49}{46}\right.,\\
\left.\frac{73}{46}, \frac{29}{46}, \frac{101}{46}, \frac{13}{46}, \frac{133}{46}, \frac{1}{46}, \frac{169}{46}, \frac{-7}{46}, \frac{209}{46}, \frac{-11}{46}\right\}.
\end{eqnarray*}

We will apply Theorem~\ref{bao} to show that $u_{2}(S(23,3))>1$. Assume that $u_{2}(S(23,3))=1$. Then by Theorem~\ref{bao}, there exist an automorphism $\phi$ of ${\mathbb Z}/23{\mathbb Z}$ and a sign $\epsilon\in\{+1,-1\}$ such that $I_{\phi, \epsilon}(i)$ are even positive numbers for all $i\in {\mathbb Z}/23{\mathbb Z}$. 

No matter which automorphism $\phi$ of ${\mathbb Z}/23{\mathbb Z}$ we choose, we have $I_{\phi, \epsilon}(0)=\epsilon \cdot d(L(23,3), 0)+f(0)=(\epsilon \cdot 3 +11)/2$. In order to make it be an even number, we need $\epsilon$ to be $-1$. By calculation, we see there exist two possible automorphisms of ${\mathbb Z}/23{\mathbb Z}$, the map $\phi_{1}$ given by multiplication by 8 and $\phi_{2}$ given by multiplication by 15, for which $I_{\phi_{j}, -1}(i)=-d(L(23,3), \phi_{j}(i))+f(i)$ are even integers for all $1 \leq i \leq 22$ and $j=1,2$. Precisely in both cases, we have:
\begin{eqnarray*}
\{I_{\phi, -1}(i)\}_{i=0}^{22}=\{4, 0,4,-2,4,0,2,0,2,0,2,0,0,2,0,2,0,2,0,4,-2,4,0\}, 
\end{eqnarray*}
where $\phi$ is either $\phi_{1}$ or $\phi_{2}$. Among these numbers $-2$ is negative. 

As a conclusion, we see that there exists no automorphism $\phi$ of ${\mathbb Z}/23{\mathbb Z}$ and sign $\epsilon\in\{+1,-1\}$ to guarantee that $I_{\phi, \epsilon}(i)$ are all even positive numbers for $i\in {\mathbb Z}/23{\mathbb Z}$. For simplicity, we say there exist no even positive matchings for the knot $S(23,3)$. Therefore the assumption that $u_{2}(S(23,3))=1$ is false, while it is easy to see that $u_{2}(S(23,3))\leq 2$, so finally we have $u_{2}(S(23,3))=2$.

Suppose that $K$ is a $(23,3)$-tangle unknotting number one knot and that $|\det(K)|=23$. Then by Lemma~\ref{lemma1} there is $\Sigma(K)=S^{3}_{23/3}(C)$ for some strongly invertible knot $C$. 

If we get the unknot by changing a negative crossing of $C$ into a positive crossing, then by (\ref{3}) we have:
\begin{eqnarray*}
d(\Sigma(K), i)-d(L(23,3), i) & \geq & 0 \quad \text{and}\\
d(\Sigma(K), i)-d(L(23,3), i)&=& 0 \pmod{2}.
\end{eqnarray*}
Now it is easy to see that there exist no even, negative matchings for the knot $K$ as well. So we conclude that $u_{2}(K)>1$. By Corollary~\ref{cor1}, we have $u_{2}(K)\leq u_{2}(S(3, 23))+1=2$, and therefore $u_{2}(K)=2$. For example, let $C$ be the left-hand trefoil knot. Then in this case $K$ is the knot in Figure~\ref{fig:f9}-(2).

\begin{figure}
	\centering
		\includegraphics[width=0.7\textwidth]{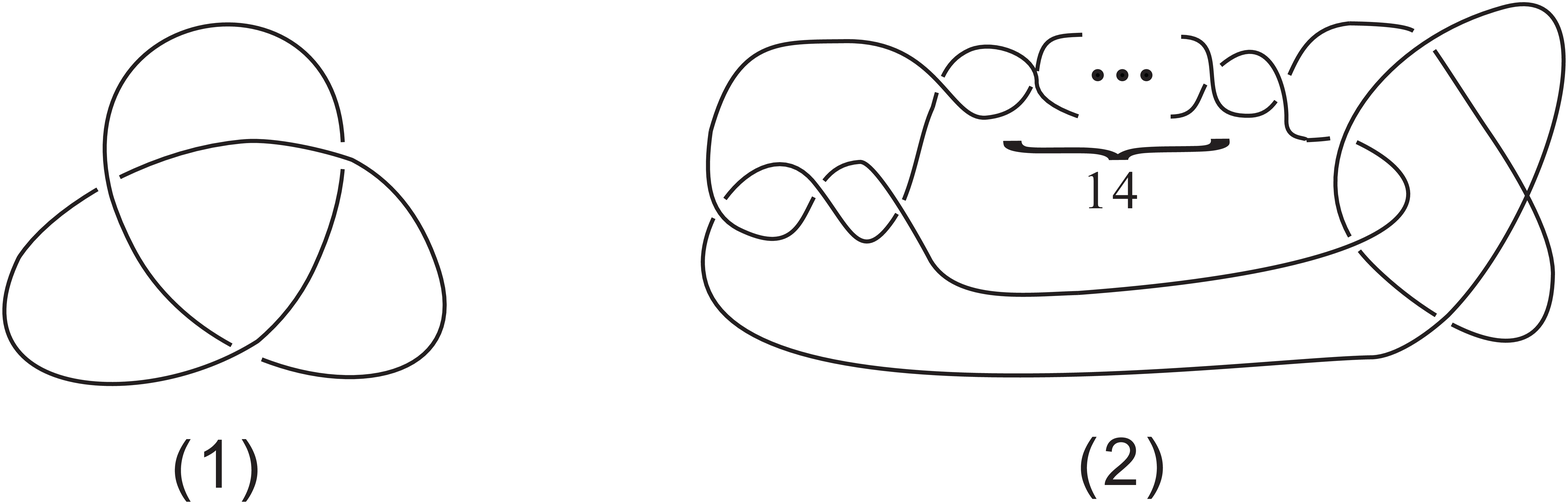}
		\caption{The left-hand trefoil knot $T$ and the tangle unknotting number one knot whose double-branched cover is $S^{3}_{23/3}(T)$.}
	\label{fig:f9}
\end{figure}
\begin{figure}[h]
	\centering
		\includegraphics[width=0.2\textwidth]{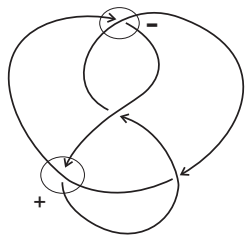}
		\caption{Figure eight knot.}
	\label{fig:f10}
\end{figure}
If there is a positive and a negative crossing in $C$ for which either of the crossing change gives the unknot, and if particularly $C$ is an amphicheiral knot with unknotting number one (For example the figure eight knot in Figure~\ref{fig:f10}), then by (\ref{3}) we have:
\begin{eqnarray*}
d(\Sigma(K), i)-d(L(23,3), i) & = & 0.
\end{eqnarray*}
By a similar argument as in the previous paragraph, we see in this case $u_{2}(K)=2$ as well.


\bibliographystyle{siam}
\bibliography{bao}

\end{document}